\title{On non-separated zero sequences  of solutions of a linear differential equation}
 \author{Igor Chyzhykov, Jianren Long}
\def\Z{\Bbb Z}
\def\N{\Bbb N}
\newcommand{\D}{\mathbb{D}}
\newcommand{\Bl}{\Bigl(}
\newcommand{\Br}{\Bigr)}
\newcommand{\Bm}{\Bigl|}
\newtheorem{thm}{Theorem}
\newtheorem{rmk}{Remark}
\newtheorem{cor}[thm]{Corollary}
\newtheorem{atheorem}{Theorem}
\newtheorem{alemma}[atheorem]{Lemma}
\newtheorem{acor}[atheorem]{Corollary}
\begin{document}
\maketitle
\begin{abstract}
Let $(z_k)$ be a  sequence of distinct points in the unit disc  $\mathbb{D}$ without limit points there.
We are looking for  a function $a(z)$ analytic in $\mathbb{D}$ and  such that
    $f''+a(z)f=0$
 possesses a solution having zeros precisely at the points $z_k$, and the resulting function  $a(z)$ has `minimal' growth.
  We focus on the case of non-separated sequences $(z_k)$  in terms of the pseudohyperbolic distance when the coefficient $a(z)$ is of zero order, but $\sup_{z\in \mathbb{D}} (1-|z|)^p |a(z)|=+\infty$ for any $p>0$.
We established a  new estimate  for the maximum modulus of $a(z)$ in terms of the functions $n_z(t)=\sum_{|z_k-z|\le t} 1 $ and
$N_z(r)=\int_0^r
\frac{(n_z(t)-1)^+}{t}dt.$ The estimate is sharp in some sense.
The main result relies on a new interpolation theorem.

{Keywords:}
interpolation, unit disc, analytic function, oscillation of solution, differential equation, prescribed zeros

MathSubjClass: 34C10, 30C15,  30H99, 30J99.
\end{abstract}

\section{Introduction}

Let  $(z_n)$ be a sequence of different complex numbers in the unit disc $\mathbb{D}=\{ z: |z|<1\}$, and let $U(z,t)=\{\zeta\in \mathbb{C} : |\zeta-z|<t\}$.  In the sequel, the symbol $C$ stands for positive constants which depend on the parameters indicated, not necessarily the same at each occurrence.

The aim of the paper is twofold.
On one hand, we are interested in zeros of solutions
 of
\begin{equation}\label{e:oscil_eq}
    f''+a(z)f=0,
\end{equation}
where $a(z)$ is an analytic function  in $\mathbb{D}$. On the other hand it leads us to some interpolation problems for corresponding classes of analytic functions in $\mathbb{D}$.

\subsection{Oscillation of solution}
It was proved by \v Seda \cite{Seda} 
 that given a sequence of distinct complex numbers $(z_n)$ with no finite limit points there exists an entire function  $a(z)$ such that the equation (1) has an entire solution $f$ with the zero sequence~$(z_n)$. This result was recently generalized for an arbitrary domain $G\subset \mathbb{C}$ when the condition $f(z_n)=0$ is replaced with
 \begin{equation}\label{e:inter_prob}
f(z_k)=b_k,
\end{equation}
 $(b_n)$ being an arbitrary sequence \cite{Vyn_Shav12}.


We are interested in description of zero  sequences  $(z_n)$ of solutions of (1) where $a(z)$ belongs to some growth class.
 Let  $ \mathcal{A}^{-p}$ be the Banach space of analytic functions in $\D$ with the norm
$$\|a \|_{\mathcal{A}^{-p}}:=\sup_{|z|<1} (1-|z|^2)^p|a(z)|.$$


In \cite{GrNikRat} the authors described behavior of the coefficient when all zero-free solutions belong to some space.
We restrict ourselves to the case when $f$ has infinitely many zeros is in the focus of the paper.
%
%

Let $\varphi(z, w)=\frac{z-w}{1-\bar z w}$.  Let $\sigma(z, w)=|\varphi_z(w)|$  denote the pseudohyperbolic distance in $\mathbb{D}$.
A sequence $(z_n)$ in the unit disc is called {\it uniformly separated} if $$\inf_{j} \prod_{n\ne j} \sigma(z_n, z_j)>0.$$

A positive Borel measure $\mu$ is  a Carleson measure if and only if there exists a constant $K$ such that $\mu (Q_\delta)\le K \delta$ for any   Carleson box $$Q_\delta=\{ \zeta \in \overline{\mathbb{D}}: |\zeta|\ge 1-\delta, |\arg \zeta -\varphi |
\le \pi \delta\}.$$

The following result describes coefficients $a(z)$ such that zero sequence of a solution \eqref{e:oscil_eq} is uniformly separated.
\begin{atheorem} 
 {  A sequence $Z$ in the unit disc is  the zero-sequence of a non-trivial solution of (1) such that $|a(z)|^2 (1-|z|^2)^3 dm(z)$ is a Carleson measure if and only if $Z$ is uniformly separated.} 
\end{atheorem}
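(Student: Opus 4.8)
The plan is to prove both implications using the interplay between the Schwarzian-type relationship connecting $a(z)$ to a solution $f$ and the classical theory of uniformly separated sequences. I would begin with the necessity direction (Carleson measure condition implies uniform separation), since this is where the analytic structure of solutions of \eqref{e:oscil_eq} must be exploited. Given a non-trivial solution $f$ with zero-sequence $Z$, I would pass to a second linearly independent solution $g$ and consider the quotient $w=f/g$, which is locally univalent and whose Schwarzian derivative satisfies $\{w,z\}=2a(z)$. The zeros of $f$ correspond to points where $w$ takes a fixed value (after normalization, $w=0$), so uniform separation of $Z$ should follow from a normal-families or Koebe-type distortion argument once one knows that $(1-|z|^2)^2|a(z)|$ is suitably controlled — and the Carleson hypothesis on $|a(z)|^2(1-|z|^2)^3\,dm(z)$ gives exactly the integrated control needed to bound $w'$ and hence separate preimages in the pseudohyperbolic metric.

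For the sufficiency direction (uniform separation implies existence of a solution with the Carleson property), I would construct the coefficient explicitly. The natural device is to form the Blaschke product $B(z)$ with zeros at $Z$, which is well-defined and bounded precisely because $Z$ is uniformly separated (indeed a fortiori a Blaschke sequence), and then try to realize $B$, or a closely related function, as a solution $f$ of \eqref{e:oscil_eq} for an appropriate $a$. Since any analytic $f$ with only simple zeros formally satisfies $f''+a f=0$ with $a=-f''/f$, the task reduces to showing that the function $a=-B''/B$ (or the analogous quantity for a modified interpolating function with simple zeros exactly on $Z$) is analytic in $\D$ and that $|a(z)|^2(1-|z|^2)^3\,dm(z)$ is Carleson. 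Here I would lean heavily on the uniform separation hypothesis: it yields lower bounds on $|B(z)|$ away from the zeros and, through the standard derivative estimates for interpolating Blaschke products, the pointwise bound $(1-|z|^2)|B'(z)/B(z)| \lesssim 1$ together with the measure-theoretic estimate that makes the relevant quantity a Carleson measure.

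The main obstacle will be the sufficiency direction, and specifically handling the zeros of $f$ themselves: at each $z_k$ the quotient $B''/B$ has a pole, so $a=-B''/B$ is not analytic, and one cannot simply take $f=B$. I would resolve this by not insisting $f=B$ but rather constructing $f$ as an analytic function vanishing exactly on $Z$ to first order whose logarithmic derivative $f'/f$ is analytic except for simple poles with residue $1$ at each $z_k$; the combination $a=-(f''/f)=-((f'/f)'+(f'/f)^2)$ then has its apparent poles cancel precisely because the zeros are simple, leaving $a$ analytic. Verifying the Carleson condition for the resulting $a$ is then a matter of splitting $\D$ into the regions near and far from $Z$ and estimating $f'/f$ and its derivative using uniform separation; the near-zero estimates are the delicate part, since one must show the local contributions to $\mu(Q_\delta)$ sum correctly over the separated points falling in each Carleson box. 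I expect the separation constant to enter quantitatively in these local bounds, and the uniform separation to be exactly what prevents the local contributions from accumulating beyond the linear bound $\mu(Q_\delta)\le K\delta$.

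Rather than reprove the full equivalence from scratch, I would also remark that this is a known result in the literature whose proof combines these analytic ingredients, and the interpolation-theoretic viewpoint developed later in the paper offers an alternative route to the sufficiency direction by exhibiting an explicit interpolating solution.
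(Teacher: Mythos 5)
The first thing to note is that the paper contains no proof of this statement at all: Theorem A is quoted from the literature, with the necessity attributed to Gr\"ohn, Nicolau and R\"atty\"a \cite{GrNikRat} and the sufficiency to Gr\"ohn \cite{Gron19}, so your closing suggestion to simply cite the known result is exactly what the paper does. Judged as mathematics, however, your sketch contains a fatal error in the sufficiency direction. The claim that the poles of $a=-f''/f$ ``cancel precisely because the zeros are simple'' is false. If $f$ has a simple zero at $z_k$, write $f'/f=(z-z_k)^{-1}+h(z)$ with $h$ analytic near $z_k$; then
$$
\frac{f''}{f}=\Bl\frac{f'}{f}\Br'+\Bl\frac{f'}{f}\Br^2=\frac{2h(z)}{z-z_k}+h'(z)+h^2(z),
$$
so only the double poles cancel, and a simple pole with residue $2h(z_k)=f''(z_k)/f'(z_k)$ survives. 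Hence $a$ is analytic at $z_k$ only if the extra condition $f''(z_k)=0$ holds, and arranging these conditions at every zero simultaneously is the whole difficulty: writing $f=Be^{g}$ with $B$ the Blaschke product of $Z$, one must solve the interpolation problem $g'(z_k)=-B''(z_k)/(2B'(z_k))$ with growth control on $g'$ --- precisely the mechanism of \eqref{e:osc_p_g}--\eqref{e:g_interpol} used later in this paper, and the heart of the cited proof in \cite{Gron19}, which rests on Seip's interpolation theory. Your proposal deletes this step by asserting automatic cancellation, and nothing else in the sketch compensates for it.

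The necessity direction also has a genuine gap. A Koebe or normal-families distortion argument applied to $w=f/g$ (whose Schwarzian is $2a$) can only yield \emph{separation}, $\inf_{n\ne j}\sigma(z_n,z_j)>0$: the Carleson hypothesis gives, by subharmonicity of $|a|^2$ on the disks $U(z,(1-|z|)/2)$, the pointwise bound $(1-|z|^2)^2|a(z)|\le C$, and pointwise bounds of this kind are all that local distortion arguments consume. Uniform separation is strictly stronger: it is equivalent to separation together with the global counting condition $\sum_{z_k\in Q_\delta}(1-|z_k|)\le C\delta$ over all Carleson boxes, i.e.\ the measure $\sum_k (1-|z_k|)\delta_{z_k}$ must itself be Carleson. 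Establishing that estimate requires a box-by-box use of the hypothesis to control the number of zeros inside each $Q_\delta$, as is done in \cite{GrNikRat}; it cannot be extracted from a purely local univalence argument, which is insensitive to how many separated zeros accumulate inside a single Carleson box. So both halves of your outline stop short of the actual content of the theorem, and the honest version of your last paragraph --- cite \cite{GrNikRat} for necessity and \cite{Gron19} for sufficiency --- is the only part that matches the paper.
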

The necessity is proved by Gr\"ohn, Nicolau, R\"atty\"a in \cite{GrNikRat}. The sufficiency is established  recently by Gr\"ohn in \cite{Gron19}.

The following problem was formulated in \cite{Heit_cmft}.

{\bf Problem.}  {\sl Let $(z_k)$ be a sequence of distinct points in $\D$ without limit points there. Find a function $a(z)$, analytic in $\D$ such that \eqref{e:oscil_eq} possesses a solution having zeros precisely at the points $z_k$. Estimate the growth of the resulting function  $a(z)$.}

The next result is closely connected to the  problem. In order  to formulate   it we need more notation.
A sequence $Z=(z_n)$ in the unit disc is called separated if $\inf_{n\ne j} \sigma(z_n, z_j)>0$.
The uniform density of a sequence $(z_n)$ (\cite{Seip93}) 
is defined by
 $$   {D}^+(Z)=\limsup_{r\to 1-} \sup_{z \in {\D}} \frac{\sum\limits_{\frac 12< \sigma(z, z_j)<r}\log \frac 1{\sigma(z, z_j)}}{\log \frac 1{1-r}}.$$



 \begin{atheorem}[{\cite[Theorem 1]{Gron19}}] \label{t:gr19} If $Z=(z_k)$ is a separated sequence in the unit disc with   $D^+(Z)<1$ then there exists $a\in \mathcal{A}^{-2}$ such that (1) admits a nontrivial solution that vanishes on $Z$.

Conversely, if  $a\in \mathcal{A}^{-2}$ and $f$ is a nontrivial solution of (1) whose zero-sequence is $Z$, then $Z$  is separated  and contains at most one point if $\|a\|_{\mathcal{A}^{-2}}\le 1$, while
$D^+(Z)\le (2\pi+1) \frac{C}{(1-C)^2}, $ where $C=\sqrt{1- \frac{2\sqrt{\|a\|_{\mathcal{A}^{-2}}}}{\|a\|_{\mathcal{A}^{-2}}+1}}$.
\end{atheorem}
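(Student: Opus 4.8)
The plan is to exploit the classical projective (Schwarzian) picture attached to equation \eqref{e:oscil_eq}. Fix linearly independent solutions $f_1,f_2$ and put $g=f_1/f_2$. Then $g$ is locally univalent and meromorphic with Schwarzian derivative $S_g=2a$, and every nontrivial solution has the form $\alpha f_1+\beta f_2$, so its zero set equals the level set $g^{-1}(w_0)$ for the single point $w_0=-\beta/\alpha\in\widehat{\mathbb{C}}$. Since $\varphi$ is a disc automorphism one has $S_{g\circ\varphi}=(S_g\circ\varphi)(\varphi')^2$, whence $(1-|z|^2)^2|S_g(z)|$, and therefore $\|a\|_{\mathcal{A}^{-2}}$, is invariant under pre-composition with automorphisms; in particular $\|S_g\|=2\|a\|_{\mathcal{A}^{-2}}$ in the hyperbolic (Schwarzian) norm. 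This invariance is what lets me transfer every local computation to the origin.

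For the converse direction I would argue as follows. If $\|a\|_{\mathcal{A}^{-2}}\le1$ then $\|S_g\|\le2$, so Nehari's univalence criterion gives that $g$ is univalent in $\D$; a univalent function attains $w_0$ at most once, so the solution has at most one zero. For general $a$ the same bound on $\|S_g\|$ yields uniform local univalence: every point has a hyperbolic disc, of radius bounded below in terms of $\|a\|_{\mathcal{A}^{-2}}$ only, on which $g$ is injective. Two distinct zeros are distinct points of $g^{-1}(w_0)$ and hence cannot lie in a common such disc, which after moving one of them to the origin by an automorphism gives a lower bound for $\sigma(z_j,z_k)$, i.e. separation. The density estimate is the quantitative refinement of this: using the Ahlfors--Weill/Becker--Pommerenke quasiconformal-extension machinery one controls the distortion of $g$ by the dilatation $C=\sqrt{1-\tfrac{2\sqrt{\|a\|}}{\|a\|+1}}$, and a packing argument for the $C$-quasiconformally controlled level set $g^{-1}(w_0)$ converts the separation into the bound $D^+(Z)\le(2\pi+1)\tfrac{C}{(1-C)^2}$. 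Pinning down this exact constant --- rather than some comparable one --- is the delicate point, and I expect it to require the sharp radial comparison equation $f''+\tfrac{\|a\|}{(1-|z|^2)^2}f=0$, whose explicit solutions fix the extremal spacing of zeros.

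For the direct direction I would realize the prescribed zero set through a Blaschke product and a single interpolation step. Since $Z$ is separated with $D^+(Z)<1$, let $B$ be the Blaschke product with zeros exactly $Z$; for a separated sequence $B$ satisfies the Bloch-type bounds $(1-|z|^2)|B'/B|=O(1)$ and $(1-|z|^2)^2|B''/B|=O(1)$ away from $Z$. Seeking the solution in the form $f=Be^{v}$, a short computation gives
\[
 a=-\frac{f''}{f}=-\frac{B''}{B}-2\frac{B'}{B}v'-v''-(v')^2 ,
\]
and the only obstruction to analyticity, the simple poles of $-B''/B-2(B'/B)v'$ at the points of $Z$, cancels precisely when $v'(z_k)=-\tfrac{B''(z_k)}{2B'(z_k)}=:c_k$. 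Because $|c_k|(1-|z_k|^2)=O(1)$, it remains to find $v'\in\mathcal{A}^{-1}$ (that is, $(1-|z|^2)|v'|$ bounded) interpolating the values $c_k$ at $Z$; integrating gives $v$, and then $(1-|z|^2)^2|v''|$ and $(1-|z|^2)^2|v'|^2$ are bounded, so $a\in\mathcal{A}^{-2}$ as required.

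The crux is therefore the interpolation statement: that a separated sequence with $D^+(Z)<1$ is interpolating for the growth space $\mathcal{A}^{-1}$. This is exactly Seip's density characterization (the threshold $D^+<1$ matching the exponent $1$ of $\mathcal{A}^{-1}$), and it is the point that the present paper must upgrade --- via its \emph{new interpolation theorem} --- in order to drop the separation hypothesis and treat non-separated $(z_k)$. I expect the main obstacle to be this interpolation step together with the verification of the second-order Bloch estimate for $B''/B$, since both are where the density condition enters and where the norm of the constructed $a$ is actually controlled.
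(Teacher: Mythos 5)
Before anything else: the paper does not prove this statement at all --- it is quoted as background (Theorem~B) from Gr\"ohn's paper \cite{Gron19}, and the only guidance the authors give is the Remark that Gr\"ohn's proof ``uses essentially the interpolation result by K.~Seip'' and, crucially, that in the first part $Z$ is only a \emph{subset} of the zero set of $f$. Measured against that, your converse half is sketched along what is indeed the standard (and correct) skeleton: $g=f_1/f_2$ has Schwarzian $S_g=2a$, M\"obius invariance of $(1-|z|^2)^2|S_g|$, Nehari's criterion when $\|a\|_{\mathcal{A}^{-2}}\le 1$, uniform local univalence (B.~Schwarz) giving separation in general, and a counting argument for the density; your inability to produce the exact constant $(2\pi+1)\frac{C}{(1-C)^2}$ is a real but secondary incompleteness, and the quasiconformal-extension machinery you invoke is not the natural tool --- the constant comes from an explicit pseudohyperbolic separation bound fed into an elementary packing estimate.

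The direct half, however, has genuine gaps that break the argument. First, your starting object need not exist: a separated sequence with $D^+(Z)<1$ need not satisfy the Blaschke condition $\sum_k(1-|z_k|)<\infty$. Separation only gives $\sum_k(1-|z_k|)^2<\infty$, and $D^+(Z)<1$ only caps partial sums $\sum_{|z_k|<r}(1-|z_k|)$ by a multiple of $\log\frac1{1-r}$; for instance, placing roughly $c\,2^m$ equally spaced points on each circle $|z|=1-2^{-m}$ with $c$ small produces a separated sequence with $D^+\asymp c<1$ whose Blaschke sum diverges, so ``the Blaschke product with zeros exactly $Z$'' does not converge. Second, even when $B$ exists, the estimates you rely on are not consequences of separation plus $D^+(Z)<1$. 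Writing $B=b_kB_k$, one has $c_k=-\frac{B''(z_k)}{2B'(z_k)}=-\frac{\bar z_k}{1-|z_k|^2}-\frac{B_k'(z_k)}{B_k(z_k)}$ and $(1-|z_k|^2)\bigl|\frac{b_j'(z_k)}{b_j(z_k)}\bigr|=\frac{1-\sigma(z_j,z_k)^2}{\sigma(z_j,z_k)}$, so controlling $(1-|z_k|^2)|c_k|$ uniformly amounts to $\sup_k\sum_{j\ne k}\bigl(1-\sigma(z_j,z_k)^2\bigr)<\infty$, which for separated sequences is precisely \emph{uniform} separation (the Carleson condition) --- a strictly stronger hypothesis, and exactly the dividing line between Theorem~A and Theorem~B. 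The same failure invalidates your claimed Bloch-type bounds on $B'/B$ and $B''/B$, so the constructed $a$ is not shown to lie in $\mathcal{A}^{-2}$. Finally, note the structural warning sign: your construction, if it worked, would produce a solution whose zero set is \emph{exactly} $Z$, a strictly stronger conclusion than the theorem's ``vanishes on $Z$'', and exactly the strengthening the paper's Remark rules out of Gr\"ohn's statement. The repair is to abandon the Blaschke product of $Z$ and instead build the solution from Seip's $\mathcal{A}^{-1}$-interpolation (or associated canonical products) directly, accepting that the solution may acquire additional zeros.
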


\begin{rmk} The proof of Theorem \ref{t:gr19} uses essentially the  interpolation result by K. Seip.
In the first part of Theorem \ref{t:gr19}, $Z$ is a {\bf subset} of the zero set of $f$.
\end{rmk}

For an analytic function $f$ in  $\mathbb{D}$ we denote $M(r,f)=\max
\{|f(z)|: |z|=r\}$, $r\in (0,1)$. Let
$n_\zeta(t)=\sum_{|z_k-\zeta|\le t} 1 $ be the number of the members of the sequence  $(z_k)$ satisfying $|z_k-\zeta|\le t$.
We write  $$N_\zeta(r)=\int_0^r
\frac{(n_\zeta(t)-1)^+}{t}dt.$$

Let  $\psi\colon [1,+\infty)\to \mathbb{R}_+$ be a nondecreasing function. We define
$$ \tilde \psi(x)=\int_1^x \frac {\psi(t)}t dt.$$

\begin{rmk} \rm In the case $\psi(x)=\log  ^p x$, $p\ge 0$ we get $\tilde \psi(x)=\frac 1{p+1} \log  ^{p+1} x$, and in
the case $\psi(x)=x^\rho $, $\rho>0$ we have $\tilde \psi(x)=\frac 1{\rho} (x^{\rho} -1)$.
\end{rmk}

Let, in addition, $\psi$ have  finite order in the sense of P\'olya (\cite{DrSh}), i.e.
\begin{equation}\label{e:polya}
\psi(2x)=O(\psi(x)), \quad x\to+\infty.
\end{equation}
%

\begin{atheorem}[{\cite{ChyShep}}] \label{t:zeros_de_old}
Let  $(z_n)$ be a sequence of distinct complex numbers in $\mathbb{D}$.
Assume that for some nondecreasing unbounded function  $\psi\colon [1,+\infty)\to \mathbb{R}_+ $ satisfying \eqref{e:polya} we have \begin{equation}\label{e:concentr_cond_psi}
 \exists C>0:\    \ \forall n\in \mathbb{N}   \quad N_{z_n} \Bigl(\frac{1-|z_n|}2 \Bigr)\le C  \psi\Bl \frac 1{1-|z_n|}\Br.
 \end{equation}
 Then there exists an analytic function $a$ in $\D$ satisfying
\begin{equation*}
 \exists C>0: \log  M(r,a)\le C \tilde \psi \Bigl( \frac 1{1-r} \Bigr), \quad r\in (0,1)
\end{equation*}
such that \eqref{e:oscil_eq} possesses a solution $f$  having zeros  precisely  at the points $z_k$, $k\in \mathbb{N}$.
\end{atheorem}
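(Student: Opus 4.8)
The plan is to produce the solution in the explicit form $f=Pe^{g}$, with $P$ a canonical product having simple zeros exactly at the points $z_n$ and $g$ an auxiliary analytic function to be chosen. Because a nontrivial solution of \eqref{e:oscil_eq} can never vanish together with its first derivative, every such solution has only simple zeros, and the coefficient is recovered as $a=-f''/f$. The whole problem is thus to select $g$ so that this quotient, which a priori has poles at the $z_n$, extends to an analytic function in $\D$ of controlled growth; the zero set of $f$ is then automatically exactly $(z_n)$, since $e^{g}$ is zero-free.

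With $f=Pe^{g}$ a direct computation gives
\begin{equation*}
a=-\frac{f''}{f}=-\frac{P''}{P}-g''-2\,\frac{P'}{P}\,g'-(g')^{2}.
\end{equation*}
Near $z_n$ one has $P'/P=(z-z_n)^{-1}-\beta_n+O(z-z_n)$ with $\beta_n:=-P''(z_n)/\bigl(2P'(z_n)\bigr)$, and the double poles of $P''/P=(P'/P)'+(P'/P)^{2}$ cancel, leaving only a simple pole. Consequently $-P''/P$ contributes residue $2\beta_n$ at $z_n$ and $-2(P'/P)g'$ contributes residue $-2g'(z_n)$, while the other two terms are analytic. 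Hence $a$ is analytic in $\D$ precisely when
\begin{equation*}
g'(z_n)=\beta_n,\qquad n\in\mathbb{N},
\end{equation*}
so the construction reduces to interpolating the derivative of $g$.

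Next I would estimate the interpolation data. Up to the terms coming from the convergence factors of $P$, the number $\beta_n$ is the value at $z_n$ of the sum $\sum_{j\ne n}(z_n-z_j)^{-1}$, which is governed by the zeros lying pseudohyperbolically close to $z_n$. Summation by parts against the counting function $n_{z_n}(t)$, together with the hypothesis \eqref{e:concentr_cond_psi} bounding $N_{z_n}\bigl((1-|z_n|)/2\bigr)$, should yield an estimate of the form
\begin{equation*}
|\beta_n|\le \frac{C}{1-|z_n|}\,\psi\Bl\frac1{1-|z_n|}\Br .
\end{equation*}
In particular $\log|\beta_n|\le C\,\tilde\psi\bigl(1/(1-|z_n|)\bigr)$, which is the compatibility condition making the interpolation feasible: by the P\'olya regularity \eqref{e:polya}, $\log\bigl((1-|z_n|)^{-1}\psi(\cdot)\bigr)$ is dominated by $\tilde\psi(1/(1-|z_n|))=\int_1^{1/(1-|z_n|)}\psi(t)t^{-1}dt$.

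The heart of the matter, and the step I expect to be the main obstacle, is the interpolation theorem itself: to construct an analytic $G$ (which will be $g'$) with $G(z_n)=\beta_n$ for all $n$ and
\begin{equation*}
\log M(r,G)\le C\,\tilde\psi\Bl\frac1{1-r}\Br,\qquad r\in(0,1).
\end{equation*}
The difficulty is that $(z_n)$ is allowed to be non-separated, so the nodes may cluster and the classical separated-sequence interpolation is unavailable; it is exactly here that the density control \eqref{e:concentr_cond_psi} and the regularity \eqref{e:polya} must be exploited, invoking (or reproving) the interpolation result of \cite{ChyShep} for the growth class governed by $\tilde\psi$. Granting such $G$, I set $g$ to be any antiderivative and $f=Pe^{g}$, so that the zero set of $f$ is exactly $(z_n)$. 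Finally $\log M(r,a)$ is estimated termwise from the displayed formula for $a$: the term $(g')^{2}$ gives $2\log M(r,G)$, the term $g''=G'$ is handled by a Cauchy estimate, and the two singular terms $P''/P$ and $(P'/P)g'$ — whose sum is analytic — are bounded at a point $z$ with $|z|=r$ by distinguishing whether $z$ is far from all zeros, where each factor is controlled by a negative power of the distance to the nearest $z_n$, or close to some $z_n$, where the already-established analyticity of $a$ lets one bound $a(z)$ by a Cauchy integral over a circle of radius comparable to $1-|z_n|$ avoiding the zeros. Collecting the bounds yields $\log M(r,a)\le C\,\tilde\psi(1/(1-r))$, as claimed.
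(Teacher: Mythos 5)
Your architecture coincides with the paper's (and with that of \cite{ChyShep}, where this theorem is proved): write $f=Pe^{g}$ with $P$ the canonical product \eqref{e:can_prod}, observe via the residue computation that $a=-f''/f$ is analytic in $\D$ precisely when $g'(z_n)=-P''(z_n)/(2P'(z_n))=:b_n$, solve this interpolation problem by Theorem \ref{t:sufficient_cond_interpol}, and bound $a$ through the identity $a=-P''/P-2(P'/P)g'-(g')^2-g''$ using logarithmic-derivative estimates (the paper invokes \cite{ChyGuHe}, \cite{CHR1} with an exceptional set of small linear measure and removes it by monotonicity of $M(r,a)$ and \eqref{e:polya}; your ``Cauchy integral over circles avoiding the zeros'' is the same mechanism done by hand). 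So the plan is the right one, and every step but one is sound.

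The one genuine gap is your estimate of the interpolation data: the claimed bound $|b_n|\le \frac{C}{1-|z_n|}\,\psi\Bl\frac1{1-|z_n|}\Br$ is false, and it fails exactly in the non-separated regime this theorem is meant for. The hypothesis \eqref{e:concentr_cond_psi} only forbids another zero $z_j$ from lying closer to $z_n$ than roughly $\frac{1-|z_n|}{2}e^{-C\psi(1/(1-|z_n|))}$ (since $|z_j-z_n|=\varepsilon$ forces $N_{z_n}\bigl(\frac{1-|z_n|}2\bigr)\ge \log\frac{1-|z_n|}{2\varepsilon}$). If $z_j$ sits at that distance, then $b_n$, whose dominant term is $(z_n-z_j)^{-1}$, has modulus of order $\frac{1}{1-|z_n|}e^{C\psi(1/(1-|z_n|))}$ --- exponentially larger than your bound; the paper's own sharpness construction (proof of Theorem \ref{t:zeors_sharp_slow}) exhibits exactly this, with $|I_1|\gtrsim \exp\bigl\{\log^{1+\eta_2}\frac1{1-|z_{n,0}|}\bigr\}$. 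What is true, and is all that \eqref{e:b_n_con_psi} requires, is the logarithmic bound $\log|b_n|\le C\tilde\psi\bigl(\frac{1}{1-|z_n|}\bigr)$, but your summation-by-parts heuristic does not deliver it (it confuses $\int n(t)t^{-2}dt$, which governs sums of reciprocal distances, with $\int (n(t)-1)^+t^{-1}dt$, which is what $N_{z_n}$ controls). The correct derivation, as in the paper's proof of Theorem \ref{t:zeros_de1}, is: (i) bound $|P''(z_n)|\le \frac{8}{(1-|z_n|)^2}\max_{|z|=(1+|z_n|)/2}|P(z)|$ by Cauchy's estimate together with the upper bound $\log M(r,P)\le C\tilde\psi\bigl(\frac1{1-r}\bigr)$ from Lemma 9 of \cite{ChyShep}; and (ii) bound $|P'(z_n)|$ from below via the identity \eqref{e:p'est} and the estimate $|\log|B_n(z_n)||\le C\tilde\psi\bigl(\frac1{1-|z_n|}\bigr)$, which Lemma \ref{l:index} extracts from the hypothesis on $N_{z_n}$ --- this is the place where \eqref{e:concentr_cond_psi} actually enters. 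With this replacement your argument goes through unchanged; as written, the key data estimate is unjustified (indeed false as stated).
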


\begin{acor}
 If for some $\rho>0$ a sequence $(z_k)$ satisfies the condition
\begin{equation*}
 \exists C>0: N_{z_k} \Bigl(  \frac{1-|z_k|}{2}\Bigr) \le C \Bigl( \frac{1}{1-|z_k|}\Bigr)^\rho,
\end{equation*}
then there exists a function $a$ analytic in $\D$ satisfying $\log  M(r,a)=O((1-r)^{-\rho})$, $r\in (0,1)$ such that
possesses a solution $f$  having zeros  precisely  at the points $z_k$, $k\in \mathbb{N}$.
\end{acor}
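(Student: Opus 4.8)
The plan is to derive this corollary directly from Theorem~\ref{t:zeros_de_old} by specializing the auxiliary function to $\psi(x)=x^\rho$. First I would check that this choice meets all the hypotheses required there. For $\rho>0$ the function $x\mapsto x^\rho$ is nondecreasing and unbounded on $[1,+\infty)$, and it has finite order in the sense of P\'olya because
\begin{equation*}
\psi(2x)=(2x)^\rho=2^\rho\, x^\rho=2^\rho\,\psi(x)=O(\psi(x)),\quad x\to+\infty,
\end{equation*}
so \eqref{e:polya} holds. With this $\psi$, the concentration condition \eqref{e:concentr_cond_psi} reads
\begin{equation*}
N_{z_n}\Bigl(\tfrac{1-|z_n|}2\Bigr)\le C\,\Bigl(\tfrac{1}{1-|z_n|}\Bigr)^\rho,
\end{equation*}
which is precisely the assumption of the corollary.

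Next I would record the explicit form of $\tilde\psi$. By definition,
\begin{equation*}
\tilde\psi(x)=\int_1^x\frac{t^\rho}{t}\,dt=\int_1^x t^{\rho-1}\,dt=\frac{1}{\rho}\bigl(x^\rho-1\bigr),
\end{equation*}
in agreement with the Remark. Applying Theorem~\ref{t:zeros_de_old} then produces an analytic function $a$ in $\D$ for which \eqref{e:oscil_eq} has a nontrivial solution $f$ vanishing exactly at the points $z_k$, and whose maximum modulus obeys
\begin{equation*}
\log M(r,a)\le C\,\tilde\psi\Bigl(\frac{1}{1-r}\Bigr)=\frac{C}{\rho}\left(\Bigl(\frac{1}{1-r}\Bigr)^\rho-1\right)=O\bigl((1-r)^{-\rho}\bigr),\quad r\in(0,1).
\end{equation*}
This is the asserted growth bound.

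There is essentially no substantive obstacle in this argument: the corollary is a pure specialization of Theorem~\ref{t:zeros_de_old}, and the only points needing verification are the elementary P\'olya growth estimate for $x^\rho$ and the one-line computation of the integral defining $\tilde\psi$. The whole content has already been carried by the harder interpolation-based proof of Theorem~\ref{t:zeros_de_old}; here I merely plug in the power function and simplify.
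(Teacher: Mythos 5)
Your proposal is correct and coincides with the paper's intended derivation: the corollary is obtained exactly by specializing Theorem~\ref{t:zeros_de_old} to $\psi(x)=x^\rho$, whose P\'olya condition \eqref{e:polya} and primitive $\tilde\psi(x)=\frac1\rho(x^\rho-1)$ the paper itself records in the preceding Remark. Nothing further is needed.
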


The following theorem is based on  an example due to J. Gr\"ohn and J. Heittokangas \cite{Gr_Heit}. It shows that the statement of the corollary is sharp.
\begin{atheorem} \label{t:zeors_sharp} For arbitrary $\rho>0$ there exists a sequence of distinct numbers $\{z_n\}$ in $\D$ with the following properties:
\begin{itemize}
 \item [i)] $N_{z_k} \Bigl(  \frac{1-|z_k|}{2}\Bigr) \le C \Bigl( \frac{1}{1-|z_k|}\Bigr)^\rho$, $k\in \mathbb{N}$;
\item [ii)] $(z_k)$ cannot be the zero sequence of a solution of \eqref{e:oscil_eq}, where $\log  M(r,a)=O((1-r)^{-\rho+\varepsilon_0})$ for  any $\varepsilon_0>0$.
\end{itemize}
\end{atheorem}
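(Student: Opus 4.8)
The plan is to realize the sharp sequence as a disjoint union of tightly clustered \emph{pairs} of points whose centres tend to $\partial\mathbb{D}$, in the spirit of the example of Gr\"ohn and Heittokangas \cite{Gr_Heit}, and then to exploit the elementary but decisive principle that two zeros of a solution of \eqref{e:oscil_eq} lying at Euclidean distance $\delta$ force $|a|$ to be of size $\delta^{-2}$ close to them. Concretely, I would fix radii $r_j\uparrow 1$ with $1-r_j$ decreasing fast enough that the discs $U(\zeta_j,(1-r_j)/2)$ are pairwise disjoint (e.g.\ $1-r_j=4^{-j}$, centres $\zeta_j=r_j$), and let the $j$-th cluster consist of two points $z_{2j-1},z_{2j}$ placed symmetrically about $\zeta_j$ at mutual distance $\delta_j:=\exp\!\bigl(-c(1-r_j)^{-\rho}\bigr)$, the constant $c$ being fixed at the end.

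For property i), note that for $z_k$ in the $j$-th cluster the only nearby point is its partner, so $n_{z_k}(t)=1$ for $t<\delta_j$ and $n_{z_k}(t)=2$ for $\delta_j\le t<(1-r_j)/2$; hence $N_{z_k}\bigl((1-r_j)/2\bigr)=\log\frac{(1-r_j)/2}{\delta_j}\le c\,(1-r_j)^{-\rho}$. Even if finitely many neighbouring clusters intrude on the disc, each contributes at most $O\bigl(\log\frac1{1-r_j}\bigr)$ to $N_{z_k}$, which is negligible against $(1-r_j)^{-\rho}$; adjusting $C$ for the first few clusters then yields i).

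The core of property ii) is the following \emph{forcing lemma}: there is an absolute $c_0>0$ such that if $g''+bg=0$ on $U(0,2)$ and $g$ vanishes at two points at distance $1$, then $\sup_{U(0,2)}|b|\ge c_0$. I would prove it by integrating along the segment joining the two zeros: writing $g(w)=g'(w_1)(w-w_1)-\int_{w_1}^{w}(w-s)\,b(s)g(s)\,ds$ and estimating the remainder by Gronwall gives $g(w_2)=g'(w_1)\bigl(1+O(\sup|b|)\bigr)$, so $g(w_2)=0$ with $g'(w_1)\ne0$ (zeros are simple) forces $\sup|b|\ge c_0$. Now suppose, contrary to ii), that $(z_k)$ is the zero-sequence of a solution $f$ with $\log M(r,a)=O((1-r)^{-\rho+\varepsilon_0})$. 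Rescaling $U(\zeta_j,2\delta_j)$ via $w=(z-\zeta_j)/\delta_j$ converts \eqref{e:oscil_eq} into $g''+bg=0$ with $b(w)=\delta_j^2\,a(\zeta_j+\delta_j w)$ and sends the cluster to two zeros at distance $1$; the lemma gives $\sup_{U(\zeta_j,2\delta_j)}|a|\ge c_0\delta_j^{-2}$. Since this supremum is attained at modulus $\le R_j:=r_j+2\delta_j$ and $1-R_j\sim 1-r_j$, we obtain $\log M(R_j,a)\ge\log(c_0\delta_j^{-2})=2c(1-r_j)^{-\rho}-O(1)\gtrsim (1-R_j)^{-\rho}$. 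Combined with the hypothesis this yields $(1-R_j)^{-\varepsilon_0}=O(1)$ along $R_j\to1$, which is false for every $\varepsilon_0>0$.

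The main obstacle is the forcing lemma in Step 3: making the ``two close zeros force a large coefficient'' statement quantitative with an \emph{absolute} constant and on a two-dimensional disc rather than an interval, and ensuring that the exceptional discs $U(\zeta_j,2\delta_j)$ sit inside $\mathbb{D}$ (true since $\delta_j$ is exponentially small) and stay disjoint, so that the purely local lower bounds on $|a|$ assemble into the global growth bound for $M(r,a)$. A secondary, routine, point is to confirm that the negative term $\log\frac{1-r_j}{2}$ in Step 1 only helps, and to organise the centres $\zeta_j$ so that the clusters accumulate solely on $\partial\mathbb{D}$ with pairwise disjoint neighbourhoods.
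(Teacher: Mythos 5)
Your proposal is correct, and it takes a genuinely different route from the one used in this paper. Note first that the paper does not actually prove Theorem \ref{t:zeors_sharp}: it is quoted from \cite{ChyShep} and rests on the example of \cite{Gr_Heit}. The comparable proof given here is that of Theorem \ref{t:zeors_sharp_slow}, and it is global: a putative solution is factored as $f=Be^g$ with $B$ a Blaschke product; the assumed coefficient bound is converted into an upper bound for $\log M(r,g')$ through the $\varphi$-order theorem of \cite{CHR2}, Lemma \ref{l:index}, Carath\'eodory's inequality and Cauchy's estimate; and the contradiction comes from the lower bound for $|g'(z_{n,0})|=|B''(z_{n,0})/(2B'(z_{n,0}))|$ produced by the cluster of $m_n$ nearby points (using the identity $g'(z_k)=-B''(z_k)/(2B'(z_k))$ at a zero, cf. \cite{Heit_cmft}). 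Your argument is instead purely local, and its crux --- the forcing lemma --- is indeed valid, in a sharper form than you need and with no delicate two-dimensional issue: integrating $f(z)=f'(z_1)(z-z_1)-\int_{z_1}^{z}(z-s)a(s)f(s)\,ds$ along the segment, with $d=|z_1-z_2|$ and $M=\max_{[z_1,z_2]}|a|$, gives $\max_{[z_1,z_2]}|f|\le |f'(z_1)|\,d+(Md^2/2)\max_{[z_1,z_2]}|f|$ and then $0=|f(z_2)|\ge |f'(z_1)|\,d\,\bigl(1-\frac{Md^2/2}{1-Md^2/2}\bigr)$ whenever $Md^2<2$; since zeros of nontrivial solutions are simple, $f'(z_1)\neq 0$, and hence $Md^2\ge 1$. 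So the ``main obstacle'' you flag in your Step 3 dissolves, and the rest (disjointness of the pair neighbourhoods, the bookkeeping $1-R_j\sim 1-r_j$, the contradiction with $\log M(r,a)=O((1-r)^{-\rho+\varepsilon_0})$) is routine and correct as you describe it. As for what each approach buys: yours is elementary and self-contained --- no Nevanlinna theory, no Blaschke factorization, absolute constants --- and, because pairs keep $n_{z_k}$ bounded, the same scheme would even yield a lossless analogue of Theorem \ref{t:zeors_sharp_slow} (exclusion of all $\eta<\eta_1+\eta_2$ rather than only $\eta<\eta_2$, taking $\delta_j=\exp\bigl(-\log^{1+\eta_1+\eta_2}\frac 1{1-r_j}\bigr)$). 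The paper's heavier route, by contrast, measures the joint effect of all points of a cluster through $B''/(2B')$ rather than one pair at a time, localizes the forced growth in the logarithmic derivative of the zero-free factor, and stays inside the Bank--Laine/Blaschke-oscillation framework of \cite{Heit_cmft} on which the rest of the paper is built.
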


In \cite{Gr_Heit}  the case when the coefficient $a\in \mathcal{A}^{-p}$, $p>2$, is considered.
Some other problems on zeros of solutions are considered in a survey  \cite{Heitt_sur}.

The aforementioned results give  a complete solution to  the Problem in the cases  when $a\in \mathcal{A}^{-2}$ and the order $a$ is finite and positive. In the intermediate cases, when $a$ is of zero order, but outside of $ \mathcal{A}^{-2}$, zero sets of solutions of \eqref{e:oscil_eq} is not described  completely.  The aim of the paper is to fill this gap. In particular, we improve Theorem \ref{t:zeros_de_old} and obtain sharp, in some sense, estimates of $a(z)$ in terms of the zero distribution of a solution of  \eqref{e:oscil_eq}. Our proof relies on a new interpolation result.

%

\subsection{Interpolation in the unit disc}

For the  space
$\mathcal{A}^{-n}$, $n>0$,  an interpolation set is defined by the condition that for every sequence $(b_k)$ with  $(b_k (1-|z_k|)^n) \in l^\infty$ there is a function $f\in \mathcal{A}^{-n}$ satisfying \eqref{e:inter_prob}.
These sets were described by K.~Seip in  \cite{Seip93}. Namely, necessary and sufficient that $(z_k)$  be an interpolation set for $\mathcal{A}^{-n}$ is that $(z_n)$ be separated, i.e. $\inf _{j\ne k} \sigma(z_k,
z_j) >0$, and $\mathcal{D}^+(Z)<n$.

In 2002 A.\ Hartmann  and X.\ Massaneda \cite{HarMas} proved that  condition
\begin{equation*}
 \exists \delta\in (0,1)\  \exists C>0 \ \forall n\in \mathbb{N} :  \quad N_{z_n} (\delta (1-|z_n|)) \le \eta\Bl \frac C
{1-|z_n|}\Br.
 \end{equation*}
is  necessary and sufficient for $Z$ to be an interpolation set for a class of
growth functions $\eta$ containing all power functions. They also describe interpolation sequences in the unit ball in $\mathbb{C}^n$ in the similar situation.
 Note that
the proof of sufficiency in \cite{HarMas} is based on $L^2$-estimate for the solution to a $\bar \partial$-equation and is non-constructive.

The following theorem was established in \cite{ChyShep}. It gives sufficient conditions for interpolation sequences in classes of analytic functions of moderate growth in the unit disc and  complements  Hartmann and Massaneda's result when $\psi(t) $ grows slowly than any power function.

\begin{atheorem} \label{t:sufficient_cond_interpol} Let  $(z_n)$ be a sequence of distinct complex numbers in $\mathbb{D}$.
Assume that for some nondecreasing unbounded function  $\psi\colon [1,+\infty)\to \mathbb{R}_+ $ satisfying \eqref{e:polya} the condition 
\eqref{e:concentr_cond_psi} is valid.
Then for any sequence  $(b_n)$ satisfying
 \begin{equation}\label{e:b_n_con_psi}
 \exists C>0:\    { \log  |b_n|} \le C {\tilde \psi\Bl \frac {1}{1-|z_n|}}\Br , \quad n\in \mathbb{N},
     \end{equation}
there exists an analytic function $f$ in  $\mathbb{D}$ with the property \eqref{e:inter_prob}
and
 \begin{equation}\label{e:growth_con_psi}
 \exists C>0:\ \log  M(r,f)\le  C \tilde\psi \Bl \frac {1}{1-r}\Br. \end{equation}
 \end{atheorem}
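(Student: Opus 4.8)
The plan is to produce $f$ as a smooth quasi-interpolant corrected by a solution of a $\bar\partial$-equation, with the whole construction organized on the scale $1-|z|$ so as to tolerate clustering (note that no separation of $(z_n)$ is assumed). First I would fix a Whitney-type cover of $\D$ by pseudohyperbolic discs $D_\alpha$ of bounded overlap, each of radius comparable to $1-|z|$ on it, together with a subordinate partition of unity $(\chi_\alpha)$, and in each $D_\alpha$ collect the nodes lying in a fixed dilate $D_\alpha^\ast$. The crucial elementary observation is that, for $R=\tfrac{1-|z_k|}{2}$, integration by parts turns the sum of logarithmic separations into the counting integral, namely $\sum_{j\ne k,\,|z_j-z_k|\le R}\log\frac{R}{|z_k-z_j|}=N_{z_k}(R)$, which by \eqref{e:concentr_cond_psi} is at most $C\psi(1/(1-|z_k|))$. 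Since the same bound forces the number of nodes in $D_\alpha^\ast$ to be $O(\psi(1/(1-|z|)))$, the Lagrange interpolation polynomial $p_\alpha$ with $p_\alpha(z_k)=b_k$ for $z_k\in D_\alpha^\ast$ has each basis term bounded, after the cancellation of the $\log R$ contributions, by $e^{C\psi}$; combined with \eqref{e:b_n_con_psi} and the inequality $\psi\le C\tilde\psi$ (which follows from \eqref{e:polya} via $\tilde\psi(x)\ge\psi(x/2)\log 2$), this gives $\log|p_\alpha(w)|\le C\tilde\psi(1/(1-|w|))$ on a neighbourhood of $D_\alpha$. Then $F=\sum_\alpha\chi_\alpha p_\alpha$ is a smooth function interpolating the data with the target growth, while $\bar\partial F=\sum_\alpha p_\alpha\,\bar\partial\chi_\alpha$ is supported in the overlaps only.

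Next I would construct a canonical product $P$, analytic in $\D$, vanishing precisely and simply on $(z_n)$, with the matched upper bound $\log M(r,P)\le C\tilde\psi(1/(1-r))$; this is the disc analogue of a genus-corrected Weierstrass product, and it is exactly here that the density input \eqref{e:concentr_cond_psi} is transferred into growth through the passage $\psi\mapsto\tilde\psi$, $\tilde\psi(x)=\int_1^x\psi(t)/t\,dt$ (the same integration that links $N_\zeta$ to $\log M$ in Jensen's formula). The decisive accompanying estimate is a lower bound for $|P|$ off small disks around the nodes: the identity of the previous paragraph again yields $\log|P(z)|\ge -C\tilde\psi(1/(1-|z|))$ there. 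Since $\bar\partial F$ vanishes at every node (because $\sum_\alpha\chi_\alpha\equiv 1$ forces $\sum_\alpha\bar\partial\chi_\alpha\equiv 0$ while the $p_\alpha$ all take the common value $b_k$ at a node $z_k$, so $\bar\partial F(z_k)=b_k\sum_\alpha\bar\partial\chi_\alpha(z_k)=0$), the quotient $g:=\bar\partial F/P$ stays locally bounded across the nodes, at the rate of the simple zeros of $P$, and satisfies $\log^+|g|\le C\tilde\psi(1/(1-|z|))$.

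Finally I would solve $\bar\partial v=g$ by Hörmander's weighted $L^2$ method with the subharmonic weight $\phi(z)=A\,\tilde\psi(1/(1-|z|))$, the Pólya regularity \eqref{e:polya} being used both to make $\phi$ admissible and to guarantee that $\Delta\phi$ is heavy enough on the overlaps to dominate the mass of $g$, so that $\int_\D|g|^2 e^{-\phi}\,dm<\infty$ once $A$ is large. The estimate produces $v$ with $\int_\D|v|^2 e^{-\phi}\,dm<\infty$, whence a sub-mean-value argument over discs of radius $\sim 1-|z|$, together with the interior bound $|g|\le e^{C\tilde\psi}$, upgrades to $\log|v(z)|\le C\tilde\psi(1/(1-|z|))$. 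Setting $f=F-Pv$ gives an analytic function, since $\bar\partial f=\bar\partial F-P\,g=0$; it satisfies \eqref{e:inter_prob}, because $P(z_n)=0$ and $F(z_n)=b_n$; and it obeys \eqref{e:growth_con_psi}, because each of $F$, $P$, $v$ does. The main obstacle is the two-sided control of the canonical product $P$ —matching $\log M(r,P)$ to $\tilde\psi$ from above while keeping $\log|P|$ bounded below on the inter-node overlaps— together with the verification that the weight $\phi$ built from $\tilde\psi$ is subharmonic and carries enough Laplacian to absorb $g$; once these are in place, the rest is bookkeeping of the density condition through the transform $\psi\mapsto\tilde\psi$.
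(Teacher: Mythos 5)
Your proposal is correct in outline, but it takes a genuinely different route from the paper's. The paper (following \cite{ChyShep}, whose scheme is reproduced in the proof of Theorem \ref{t:suff_interpol2}) is fully constructive: it forms the canonical product \eqref{e:can_prod} of genus $s$ tied to the P\'olya order of $\psi$, proves the index-type Lemma \ref{l:index} relating $\log|B_k(z_k)|$ to $N_{z_k}$, and then writes the interpolant explicitly as a Lagrange-type series $f(z)=\sum_n \frac{b_n}{z-z_n}\frac{P(z)}{P'(z_n)}\bigl(\frac{1-|z_n|^2}{1-\bar z_n z}\bigr)^{s_n-1}$ with individually chosen exponents $s_n$ forcing convergence; the growth bound \eqref{e:growth_con_psi} then comes from the upper estimate of $P$ and the lower estimate of $|P'(z_n)|$ supplied by Lemma \ref{l:index}. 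You instead build a smooth quasi-interpolant from local Lagrange polynomials on a Whitney cover and correct it by H\"ormander's weighted $\bar\partial$-method; this is precisely the (non-constructive) strategy of Hartmann--Massaneda \cite{HarMas}, which the paper explicitly contrasts with its own approach. Your route buys conceptual transparency and flexibility (the same machinery handles very general weights), at the price of losing the explicit formula; the paper's route buys constructivity and, importantly, the auxiliary estimates (Lemma \ref{l:index}, the bound on $1/|P'(z_k)|$) that are reused verbatim in the differential-equation applications, Theorems \ref{t:zeros_de} and \ref{t:zeros_de1}. Your individual claims check out: the identity $\sum_{j\ne k,\,|z_j-z_k|\le R}\log\frac{R}{|z_k-z_j|}=N_{z_k}(R)$ is correct, $\psi\le C\tilde\psi$ does follow from \eqref{e:polya} as you say, and the radial weight $\tilde\psi(1/(1-|z|))$ is automatically subharmonic with $\Delta\phi\gtrsim \psi(1/(1-r))(1-r)^{-2}$, since its derivative in $\log r$, namely $r\psi\bigl(\frac1{1-r}\bigr)/(1-r)$, is a product of nonnegative nondecreasing factors.

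The thinnest point of your sketch is the bound $\log^+|g|\le C\tilde\psi$ for $g=\bar\partial F/P$. Your justification invokes only the first-order vanishing of $\bar\partial F$ at each node against the simple zero of $P$ there; but the regime this theorem targets is exactly the non-separated one, where many nodes may cluster in a disc of radius $(1-|z_k|)e^{-c\psi/m}$, so near such a cluster $P$ has $m$ nearby zeros while $\bar\partial F$ vanishes only to first order at each. The quotient is still admissible, but to see this one must argue pointwise as follows: if $z_{k_1}$ is the node nearest to $z$, the triangle inequality gives $|z-z_j|\ge\frac12|z_{k_1}-z_j|$ for every other node $z_j$, so the hypothesis \eqref{e:concentr_cond_psi} \emph{at the node} $z_{k_1}$ bounds from below the product of all factors of $P$ except the one vanishing at $z_{k_1}$, and that single remaining factor $\sim|z-z_{k_1}|$ is cancelled by the first-order vanishing of $\bar\partial F$ at $z_{k_1}$. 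Together with the standard two-regime lower estimates for the Weierstrass factors $E(w,s)$ (the genus being needed so that the far part of the product converges and is bounded below by $e^{-C\tilde\psi}$), this yields your claim; without this nearest-node transfer, the local-boundedness argument alone does not give a bound of the required size. Also note that even the existence and convergence of your product $P$ (i.e.\ $\sum(1-|z_n|)^{s+1}<\infty$) is not free: it must be extracted from \eqref{e:concentr_cond_psi} via the per-box counting bound and the polynomial growth of $\psi$ implied by \eqref{e:polya}, which is what fixes the genus $s$. With these points filled in, your argument goes through.
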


The class $\mathcal{R}$ consists of functions $\psi \colon [1,+\infty)\to \mathbb{R}_+$ which are nondecreasing, and  such that
$\tilde \psi(r)=O(\psi(r))$ as $r\to+\infty$. We note that the power function $x^\rho$, $\rho>0$, belongs to $\mathcal{{R}}$.  The previous theorem becomes a criterion if $\psi \in \mathcal{{R}}$ (\cite[Theorem 5]{ChyShep}).

In 2007  A. Borichev, R.\ Dhuez and K.\ Kellay \cite{Borich_k} solved an interpolation problem in classes of functions of arbitrary growth in both the complex plane and the unit disc.  Following \cite{Borich_k} let $h\colon [0,1)\to [0,+\infty)$ such that  $h(0)=0$, $h(r) \uparrow \infty$ $(r \to 1-)$.  Denote by $\mathcal{A}_h$  and $\mathcal{A}_h^p$, $p>0$ the Banach spaces  of analytic functions on $\D$ with the norms
$$ \| f\|_h=\sup_{z\in \D} |f(z)| e^{-h(z)}<+\infty, \quad  \| f\|_{h, p}= \biggl( \int_{\D} |f(z)| ^p e^{-p h(|z|)} dm_2(z)\biggr)^{\frac 1p}, $$
respectively.
We then suppose that  $h\in C^3([0,1))$, $\rho(r):=(\Delta h(r))^{-\frac12}\searrow 0$, and $\rho'(r)\to 0$ as $r\to 1-$, for all
$ K>0$:
 \begin{equation}\label{e:rho_slow_var}
   \rho(r+x)\sim \rho(r)\; \text{  for } |x|\le K\rho(r), \; r\to 1-
   \end{equation}
    provided that $K\rho(r)<1-r$, and either $\rho(r)(1-r)^{-c}$ increases for some finite $c$ or $\rho'(r) \log  \rho(r)\to 0$ as $r\uparrow1$. Note that these assumptions imply $h(r)/\log  \frac 1{1-r} \to+\infty$ $(r\to 1-)$.

Given such an $h$ and a sequence $Z=(z_k)$ in $\D$ denote by $$\mathcal{D}_\rho^+(Z)=\limsup_{R\to\infty} \limsup_{|z|\to1-} \frac{\mathop{card}(Z\cap U(z, R\rho(z)))}{R^2}.$$

\begin{atheorem}[Theorem 2.3 \cite{Borich_k}] \label{t:bdk} A sequence $Z$ is an interpolation  set for $\mathcal{A}_h (\D)$ if and only if $\mathcal{D}_\rho^+(Z)< \frac 12$ and
\begin{equation}\label{e:rho_separ}
\inf\limits _{k\ne n} \dfrac{|z_k-z_n|}{\min \{\rho(|z_k|), \rho(|z_n|)\}}>0.
\end{equation}
 \end{atheorem}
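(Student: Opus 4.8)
The statement is the Borichev--Dhuez--Kellay characterization, so the plan is to reprove it by the H\"ormander $\bar\partial$-technique adapted to the radial weight $h$, supplemented by a local-model (Jensen-type) count for the density threshold; throughout, $\rho(r)=(\Delta h(r))^{-1/2}$ is the natural length scale, and the slow-variation hypotheses \eqref{e:rho_slow_var} let me treat $\rho$ as essentially constant on any disc $U(z,R\rho(|z|))$.

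\emph{Sufficiency.} Assume \eqref{e:rho_separ} and $\mathcal{D}_\rho^+(Z)<\tfrac12$, and fix data $(b_k)$ with $\sup_k|b_k|e^{-h(|z_k|)}<\infty$. First I would build a smooth quasi-interpolant: separation makes the discs $U(z_k,\tfrac12\rho(|z_k|))$ pairwise disjoint, so I pick cut-offs $\chi_k$ equal to $1$ near $z_k$ and supported in $U(z_k,\rho(|z_k|))$ with $|\bar\partial\chi_k|\lesssim\rho(|z_k|)^{-1}$, and set $F=\sum_k b_k\chi_k$. Then $F(z_k)=b_k$ but $F$ is only smooth; writing $f=F-u$ with $\bar\partial u=\bar\partial F=:g$ makes $f$ holomorphic, and $f(z_k)=b_k$ provided $u(z_k)=0$. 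To secure both the vanishing and the growth I would solve the $\bar\partial$-equation with the weight $2\phi=2h+\sum_k\log\frac{|z-z_k|^2}{\rho(|z_k|)^2}$ (each logarithm localized near $z_k$). H\"ormander's estimate yields $\int|u|^2e^{-2\phi}\,dm\lesssim\int\frac{|g|^2}{\Delta\phi}e^{-2\phi}\,dm$; finiteness of the left-hand side against the factor $|z-z_k|^{-2}$ forces $u(z_k)=0$, while the right-hand side is finite because $g$ is supported on the collars where $|g|\lesssim\rho^{-1}$ and $\Delta\phi\asymp\rho^{-2}$.

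\emph{Role of the density.} The whole construction hinges on $\phi$ staying subharmonic with $\Delta\phi\asymp\rho^{-2}$ off the nodes. Each log-term adds a point mass $2\pi$ to $\Delta\phi$, so the total added mass in $U(z,R\rho)$ is $2\pi\,\mathrm{card}(Z\cap U(z,R\rho))$. The hypothesis $\mathcal{D}_\rho^+(Z)<\tfrac12$ says this is eventually below $2\pi\cdot\tfrac12R^2=\pi R^2\approx\int_{U(z,R\rho)}\Delta h\,dm$, i.e.\ the singular masses never exceed the curvature reservoir supplied by $h$; this is exactly what keeps the corrected weight admissible and the constant $\tfrac12$ sharp.

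\emph{Necessity.} For the separation \eqref{e:rho_separ} I would argue by contradiction with an extremal (peak) function: interpolating $b_j=e^{h(|z_j|)}$ and $b_k=0$ for $k\ne j$ produces $f\in\mathcal{A}_h$ of size $\asymp e^{h(|z_j|)}$ at $z_j$ that vanishes at every other node, and rescaling $U(z_j,\rho(|z_j|))$ to a fixed disc and passing to a normal-families limit, a node $z_k$ at distance $o(\rho)$ from $z_j$ would contradict the uniform bound on $|f|e^{-h}$. For the density bound, after Taylor-expanding $h$ to second order on $U(z,R\rho(|z|))$ and using \eqref{e:rho_slow_var}, the space $\mathcal{A}_h$ localizes to a weighted model in which a unit-norm function has at most $\approx\frac1{2\pi}\int_{U}\Delta h\,dm\approx\tfrac12R^2$ zeros, so point evaluations cannot be independent once $Z$ carries more than $\tfrac12R^2$ points there, contradicting free interpolation and forcing $\mathcal{D}_\rho^+(Z)\le\tfrac12$ (strictness coming from the required uniform boundedness of the interpolation operator). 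I expect the genuinely hard step to be this density necessity: converting the heuristic ``degrees of freedom $\approx\tfrac12R^2$'' into a quantitative obstruction requires a uniform-in-$z$ normalization of the weight on the scale $R\rho(|z|)$ together with a comparison between free interpolation and the existence of a bounded right inverse of the restriction map, precisely where the regularity $h\in C^3$ and the slow variation of $\rho$ become indispensable.
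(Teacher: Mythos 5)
First, a point of comparison that matters here: the paper does not prove this statement at all. It is quoted verbatim as background (Theorem~2.3 of Borichev--Dhuez--Kellay \cite{Borich_k}) and used as a black box, so there is no proof in the paper to measure your attempt against; your sketch has to stand on its own. As a roadmap it follows the standard H\"ormander $\bar\partial$ strategy for such theorems, but two load-bearing steps are genuinely missing, and the first of them is not merely missing but wrong as written.

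The decisive gap is in the sufficiency argument: as described, it never actually uses the hypothesis $\mathcal{D}_\rho^+(Z)<\tfrac12$, so it would ``prove'' that every sequence satisfying \eqref{e:rho_separ} is interpolating, which is false. The point masses $2\pi\delta_{z_k}$ that your logarithms add to $\Delta\phi$ are \emph{positive}, and positive masses can never destroy subharmonicity; your ``role of the density'' paragraph, which weighs these positive masses against the curvature $\int\Delta h\,dm$, therefore constrains nothing. The real difficulty sits exactly in the phrase you pass over, ``each logarithm localized near $z_k$.'' Localization is unavoidable (the full sum $\sum_k\log\frac{|z-z_k|^2}{\rho(|z_k|)^2}$ diverges, since $\rho(|z_k|)\to0$), but cutting off a potential of mass $2\pi$ inside $U(z_k,\rho(|z_k|))$ creates, by Stokes' theorem, a compensating smooth \emph{negative} contribution to $\Delta\phi$ of total mass $-2\pi$ concentrated in the transition collar, hence of density on the order of $2\rho(|z_k|)^{-2}$, which exceeds $\Delta h=\rho^{-2}$. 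Your $\phi$ is then not subharmonic and H\"ormander's estimate does not apply. The repair is the heart of the theorem: smear the compensating mass over large discs $U(z_k,R\rho(|z_k|))$, note that by \eqref{e:rho_slow_var} each point of $\D$ lies in roughly $\mathcal{D}_\rho^+(Z)R^2$ such discs, so the total negative density is about $2\mathcal{D}_\rho^+(Z)\rho^{-2}$ and $\Delta\phi\gtrsim(1-2\mathcal{D}_\rho^+(Z))\rho^{-2}>0$ precisely when $\mathcal{D}_\rho^+(Z)<\tfrac12$; one must also check $|\phi-h|=O(1)$ to return from the weight $e^{-2\phi}$ to the sup-norm space $\mathcal{A}_h$. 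None of this appears in your text. The second gap you concede yourself: for the necessity of the density bound, the Jensen-type count gives at best the non-strict inequality $\mathcal{D}_\rho^+(Z)\le\tfrac12$, and the upgrade to strict inequality (stability of interpolation under perturbation, normal-families limits, uniform bounds on the interpolation constant) is exactly the hard part you describe but do not carry out. So the proposal is a reasonable outline of a known alternative route, not a proof.
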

The similar description holds for interpolation sets for the classes $\mathcal{A}_{h}^p (\D)$, $p>0$~(\cite{Borich_k}).
\section{Main results}
In this paper we are mostly interested in the case where $\psi(r) $ is a slowly growing function unbounded with respect to  $\log \frac 1{1-r}$ as $r\to 1-$, in particular, $\psi\not \in \mathcal{{R}}$.  Theorem \ref{t:sufficient_cond_interpol} seems no longer to be sharp for such functions $\psi$.

For  $s=[\rho]+1$, where $\rho=\rho^*[\psi]$, we consider a canonical product of the form
\begin{equation}\label{e:can_prod}
P(z)=P(z, Z,s)=\prod_{n=1}^\infty E\Bigl( \frac{1-|z_n|^2}{1-\bar z_n z},s\Bigr),
\end{equation}
where  $E(w,0)=1-w$,  $E(w,s)=(1-w)\exp \{w+ w^2/2 +\dots+ w^s/s\}$, $ s\in \N$.
This product is an analytic function in $\mathbb{D}$ with the zero sequence  $Z=(z_n)$ provided  $\sum\limits_{z_n\in Z} (1-|z_n|)^{s+1}<\infty$.

\begin{rmk} The  P\'olya order
  $\rho^*[\psi]$    (\cite{DrSh})  characterized by the condition that for any $\rho>\rho^*[\psi]$, we have
\begin{equation}\label{e:psi_con}
 \psi(Cx)\le C^\rho \psi(x), \quad x, C\to \infty.
\end{equation}
\end{rmk}


The following result allows to relax the assumption on $N_{z_n}(t)$ in comparison with Theorem \ref{t:sufficient_cond_interpol}.
\begin{thm} \label{t:suff_interpol2} Let  $(z_n)$ be a sequence of distinct complex numbers in $\mathbb{D}$.
Assume that for some nondecreasing unbounded function  $\psi\colon [1,+\infty)\to \mathbb{R}_+ $ satisfying \eqref{e:polya} we have that \eqref{e:nu_est_rho} and either
\begin{equation}\label{e:concentr_cond_til_psi}
 \exists C>0:\    \ \forall n\in \mathbb{N}   \quad N_{z_n} \Bigl(\frac{1-|z_n|}2 \Bigr)\le C  \tilde \psi\Bl \frac 1{1-|z_n|}\Br,
 \end{equation}
or
\begin{equation}\label{e:ln_prime2}
 \forall n\in \mathbb{N}:\;    -\log  \bigl((1-|z_n|)|P'(z_n)|\bigr) \le  C\tilde\psi\Bl \frac 1{1-|z_n|}\Br,
 \end{equation} or
 \begin{equation}\label{e:ln_min_max}
 \forall n\in \mathbb{N}:\;    -\log  |B_n(z_n)| \le  C\tilde\psi\Bl \frac 1{1-|z_n|}\Br,
 \end{equation}
  holds, where $B_n(z)=P(z)/E(\frac{1-|z_n|^2}{1-\bar z_n z},s)$, $P$ is the canonical product defined by \eqref{e:can_prod}, $s
  \ge [\rho]+1$, where $\rho$ is Pol\'ya order of $\psi$.
Then for any sequence  $(b_n)$ satisfying \eqref{e:b_n_con_psi}
there exists an analytic function $f$ in  $\mathbb{D}$ with the properties \eqref{e:inter_prob}
and \eqref{e:growth_con_psi}.
 \end{thm}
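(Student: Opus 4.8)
The plan is to build $f$ explicitly from the canonical product $P$ in \eqref{e:can_prod} by a Lagrange-type series, so I first record how the three alternative hypotheses relate. Writing $E_n(z)=E\bigl(\frac{1-|z_n|^2}{1-\bar z_n z},s\bigr)$ and $P=E_nB_n$, and differentiating at $z_n$ (where $E_n(z_n)=0$), one gets $P'(z_n)=E_n'(z_n)B_n(z_n)$. A direct computation gives $|E_n'(z_n)|\asymp (1-|z_n|)^{-1}$ for $z_n$ near the boundary, so $(1-|z_n|)|P'(z_n)|\asymp|B_n(z_n)|$ and hence \eqref{e:ln_prime2} and \eqref{e:ln_min_max} are equivalent up to a bounded additive term. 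Moreover a standard lower estimate of $\log|B_n(z_n)|$ through $-N_{z_n}$ shows that \eqref{e:concentr_cond_til_psi} implies both. Thus I may assume throughout the single lower bound $-\log|B_n(z_n)|\le C\tilde\psi\bigl(\frac1{1-|z_n|}\bigr)$.

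Next I would establish the global growth of the product, namely $\log M(r,P)\le C\tilde\psi\bigl(\frac1{1-r}\bigr)$. Using \eqref{e:nu_est_rho} to guarantee that the genus $s=[\rho]+1$ makes $\sum_n(1-|z_n|)^{s+1}<\infty$ and to control the counting function, I would estimate $\log|E_n(z)|$ factor by factor (separating the factors with $z_n$ near the boundary arc beneath $z$ from the rest) and sum, converting $\psi$ into $\tilde\psi$ by means of the P\'olya regularity \eqref{e:polya}, \eqref{e:psi_con}. This is the same mechanism that underlies Theorem~\ref{t:zeros_de_old}, adapted so that only the relaxed bound by $\tilde\psi$ (rather than $\psi$) is required.

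With $P$ in hand I would define the interpolant by
\begin{equation*}
f(z)=\sum_{n=1}^\infty b_n\,\frac{P(z)}{P'(z_n)(z-z_n)}\Bl\frac{1-|z_n|^2}{1-\bar z_n z}\Br^{m},
\end{equation*}
where the last factor equals $1$ at $z=z_n$ and $m$ is a fixed integer chosen for convergence. Since $P(z_k)=0$ while $\frac{P(z)}{z-z_k}\to P'(z_k)$, each term equals $b_k$ at $z_k$ and vanishes at the other nodes, so $f$ satisfies \eqref{e:inter_prob} once the series converges locally uniformly. The growth bound \eqref{e:growth_con_psi} then follows by inserting $\log|b_n|\le C\tilde\psi$ from \eqref{e:b_n_con_psi}, the upper bound on $|P(z)|$, and the lower bound $-\log|B_n(z_n)|\le C\tilde\psi$ from the first step, after splitting the sum according to whether $z_n$ is pseudohyperbolically close to $z$ or not.

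The hard part will be the near-diagonal block of that sum. Because $(z_n)$ is \emph{not} separated, arbitrarily many nodes may cluster near a given $z$, so the terms with $z-z_n$ small cannot be handled one at a time; instead I must bound their joint contribution using the integrated density $N_{z}$ together with \eqref{e:nu_est_rho}, exactly as these quantities enter the hypotheses. Controlling this accumulation while keeping every exponent inside a single $C\tilde\psi\bigl(\frac1{1-|z|}\bigr)$ — which is precisely what makes the passage from \eqref{e:concentr_cond_psi} to \eqref{e:concentr_cond_til_psi} admissible — is the delicate step; the far block, by contrast, converges geometrically once $m$ is large enough and contributes only a bounded amount.
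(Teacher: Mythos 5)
Your skeleton (reduce the three hypotheses to one via the index lemma and the identity $P'(z_n)(1-|z_n|^2)=-\bar z_n B_n(z_n)e^{1+\frac12+\dots+\frac1s}$, bound $\log M(r,P)$ by $\tilde\psi$, then interpolate with a Lagrange-type series built on $P$) is indeed the paper's approach, which follows \cite{ChyShep}. But there is a genuine gap at the decisive point: you take the regularizing factor $\bigl(\frac{1-|z_n|^2}{1-\bar z_n z}\bigr)^{m}$ with a \emph{fixed} integer $m$, whereas the paper (and \cite{ChyShep}) must use a \emph{variable, increasing sequence of exponents} $s_n\to\infty$. With fixed $m$ the series simply diverges, and not because of clustering: the theorem must handle every target sequence with $\log|b_n|\le C\tilde\psi\Bl\frac1{1-|z_n|}\Br$, and since $\psi$ is unbounded one always has $\tilde\psi(x)/\log x\to\infty$, so the admissible $|b_n|$ grow faster than any power of $\frac1{1-|z_n|}$. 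Concretely, take the separated sequence $z_n=1-2^{-n}$, $\psi(x)=\log x$ (so $\tilde\psi(x)=\frac12\log^2x$, and all hypotheses hold with $|B_n(z_n)|\asymp1$, $|P'(z_n)|\asymp(1-|z_n|)^{-1}$), and $b_n=\exp\{C\tilde\psi(2^{n})\}=\exp\{\frac C2(n\log2)^2\}$. Then at $z=0$ the $n$-th term of your series has modulus $\asymp e^{\frac C2(n\log 2)^2}2^{-n(m+1)}\to\infty$: every fixed power of $1-|z_n|$ loses to the coefficients. This also shows that your closing assessment inverts the actual difficulty: the ``far block'' is precisely where a fixed $m$ fails and where the exponents $s_n$ (chosen roughly so that $(1-|z_n|)^{s_n}$ dominates $e^{C\tilde\psi(\frac1{1-|z_n|})}$) are indispensable, while near $z_n$ one only needs $2^{s_n}\le\exp\{C\tilde\psi\Bl\frac1{1-|z|}\Br\}$, which that same choice of $s_n$ delivers because there $1-|z_n|\asymp1-|z|$.

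Your first step is essentially sound but should be stated as the paper does: under \eqref{e:nu_est_rho}, Lemma \ref{l:index} gives the two-sided estimate $|\log|B_k(z_k)|+N_{z_k}(\delta(1-|z_k|))|\le C\tilde\psi\Bl\frac1{1-|z_k|}\Br$, so all three hypotheses \eqref{e:concentr_cond_til_psi}--\eqref{e:ln_min_max} are mutually equivalent (not merely that \eqref{e:concentr_cond_til_psi} implies the other two), and in particular each of them yields the bound $|\log|B_k(z_k)||\le C\tilde\psi\Bl\frac1{1-|z_k|}\Br$ needed later. To repair the main construction, replace your interpolant by
\begin{equation*}
f(z)=\sum_{n=1}^\infty \frac{b_n}{z-z_n}\,\frac{P(z)}{P'(z_n)}\Bl\frac{1-|z_n|^2}{1-\bar z_n z}\Br^{s_n-1}
\end{equation*}
with a suitable increasing sequence $(s_n)$ of natural numbers, and then run the estimate \eqref{e:|f|_est} as in \cite[pp.~328--329]{ChyShep}; without this modification the proof cannot be completed.
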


Hypotheses  similar to \eqref{e:ln_prime2} are frequently used in interpolation problems (e.g. \cite{BerTay})
The next theorem addresses the problem formulated in the introduction.

\begin{thm} \label{t:zeros_de}
 Let conditions of Theorem \ref{t:suff_interpol2} be satisfied. Then there exists an analytic function $a$ in $\D$ satisfying
\begin{equation*}
 \exists C>0: \log  M(r,a)\le C \tilde \psi \Bigl( \frac 1{1-r} \Bigr), \quad r\in (0,1)
\end{equation*}
such that \eqref{e:oscil_eq} possesses a solution $f$  having zeros  precisely  at the points $z_k$, $k\in \mathbb{N}$.
\end{thm}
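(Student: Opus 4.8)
The plan is to reduce the construction of the coefficient to the interpolation result of Theorem~\ref{t:suff_interpol2} by correcting the logarithmic derivative of the canonical product $P$ attached to $(z_k)$. Recall that if $f$ is analytic and has only simple zeros, situated exactly at the $z_k$, and $g=f'/f$ denotes its logarithmic derivative, then $f''/f=g'+g^2$, so that $f$ solves \eqref{e:oscil_eq} with
$$a=-\frac{f''}{f}=-(g'+g^2).$$
Near a simple zero $z_k$ one has $g(z)=\frac1{z-z_k}+\beta_k+O(z-z_k)$ with $\beta_k=\frac{f''(z_k)}{2f'(z_k)}$, and a direct expansion shows that $g'+g^2$ has a simple pole at $z_k$ with residue $2\beta_k$. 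Hence $a$ is analytic in $\D$ precisely when $\beta_k=0$ for every $k$, i.e. when $f''(z_k)=0$. It therefore suffices to produce an analytic $f$ with simple zeros exactly at the $z_k$ and $f''(z_k)=0$, and then to control the growth of the resulting $a$.

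For the construction I start from the canonical product $P=P(z,Z,s)$ of \eqref{e:can_prod}, whose logarithmic derivative satisfies $\frac{P'}{P}(z)=\frac1{z-z_k}+\alpha_k+O(z-z_k)$ near $z_k$, where $\alpha_k=\frac{P''(z_k)}{2P'(z_k)}$. I would apply Theorem~\ref{t:suff_interpol2} to the data $b_k:=\alpha_k$ to obtain an analytic function $q$ in $\D$ with $q(z_k)=\alpha_k$ and $\log M(r,q)\le C\tilde\psi\bigl(\frac1{1-r}\bigr)$. The admissibility requirement \eqref{e:b_n_con_psi}, namely $\log|\alpha_k|\le C\tilde\psi\bigl(\frac1{1-|z_k|}\bigr)$, is exactly what the hypotheses \eqref{e:concentr_cond_til_psi}, \eqref{e:ln_prime2} or \eqref{e:ln_min_max} guarantee: the lower bounds for $|P'(z_k)|$ (respectively for $|B_k(z_k)|$) furnished by these conditions, together with the canonical-product growth bound $\log M(r,P)\le C\tilde\psi$ coming from \eqref{e:nu_est_rho}, yield an upper bound of the right order for $|\alpha_k|=\frac{|P''(z_k)|}{2|P'(z_k)|}$. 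Setting $g=\frac{P'}{P}-q$ and $f=P\exp\bigl(-\int q\bigr)$, the function $f$ is analytic, vanishes exactly on $(z_k)$ with simple zeros, and by the choice $q(z_k)=\alpha_k$ its logarithmic derivative $g$ has constant term $\beta_k=\alpha_k-q(z_k)=0$ at each $z_k$. Consequently
$$a=-(g'+g^2)=-\frac{P''}{P}+2q\,\frac{P'}{P}-q^2+q'$$
is analytic in $\D$, and $f$ is a solution of \eqref{e:oscil_eq} with the prescribed zeros.

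It remains to estimate $M(r,a)$. The terms $q'$ and $q^2$ are harmless: Cauchy's estimate together with the P\'olya-type property \eqref{e:polya} gives $\log M(r,q')\le C\tilde\psi\bigl(\frac1{1-r}\bigr)$, while $\log M(r,q^2)=2\log M(r,q)\le C\tilde\psi\bigl(\frac1{1-r}\bigr)$. The delicate term is $\Psi:=-\frac{P''}{P}+2q\frac{P'}{P}$, which is analytic even though each summand has a pole at every $z_k$. For $z$ at pseudohyperbolic distance bounded below from the zero set I would bound $|P'/P|$ and $|P''/P|$ by the standard logarithmic-derivative estimates in terms of $\log M(\cdot,P)$ and $1-|z|$, and combine them with the bound on $|q|$ to obtain $\log|\Psi(z)|\le C\tilde\psi$ there; for $z$ near some $z_k$ I would transfer this estimate by applying the maximum principle to the analytic function $\Psi$ on a disc centred at $z_k$ of radius comparable to $1-|z_k|$ whose boundary avoids the zeros. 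This gives $\log M(r,a)\le C\tilde\psi\bigl(\frac1{1-r}\bigr)$, as required.

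The main obstacle is precisely this last growth estimate for $\Psi$ near the zeros: although the poles cancel identically, bounding the resulting analytic function uniformly demands both sharp control of $P'/P$ and $P''/P$ off the zero set (hence good estimates for the canonical product, tied to the density hypothesis \eqref{e:nu_est_rho}) and a careful local maximum-principle argument on discs shrinking like $1-|z_k|$. A secondary, but essential, point is to verify that the interpolation data $\alpha_k=\frac{P''(z_k)}{2P'(z_k)}$ indeed satisfy \eqref{e:b_n_con_psi} under each of the three alternative assumptions; this is where the specific hypotheses of Theorem~\ref{t:suff_interpol2} enter, the conditions \eqref{e:ln_prime2} and \eqref{e:ln_min_max} being tailored to produce the lower bound on $|P'(z_k)|$ that converts the growth bound for $P$ into the desired bound for $|\alpha_k|$.
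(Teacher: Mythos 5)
Your construction coincides, up to notation, with the one the paper actually uses (it has no separate proof of Theorem~\ref{t:zeros_de}; the argument is the second half of the proof of Theorem~\ref{t:zeros_de1}, following \cite{ChyShep}): write $f=Pe^{g}$ with $P$ the canonical product \eqref{e:can_prod}, note via \eqref{e:osc_p_g} that $a$ is analytic exactly when $g'(z_k)=-P''(z_k)/(2P'(z_k))=:b_k$, verify $\log|b_k|\le C\tilde\psi\bigl(\frac1{1-|z_k|}\bigr)$ from the Cauchy estimate for $P''(z_k)$ together with \eqref{e:p'est} and \eqref{e:error_est_ro}, and solve the interpolation problem for $g'$ by Theorem~\ref{t:suff_interpol2}. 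Your $q$ is $-g'$, and your formula $a=-\frac{P''}{P}+2q\frac{P'}{P}-q^2+q'$ is \eqref{e:osc_p_g} rearranged; the admissibility discussion, including the equivalence of the three hypotheses via Lemma~\ref{l:index} and \eqref{e:p'est}, is exactly the paper's. Up to this point your proposal is correct and identical in substance.

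The gap is in your final growth estimate for $\Psi=-\frac{P''}{P}+2q\frac{P'}{P}$. Your plan establishes $\log|\Psi(z)|\le C\tilde\psi$ only at points whose pseudohyperbolic distance to the zero set is bounded below by a fixed constant, and then transfers it to a neighbourhood of $z_k$ by the maximum principle over a circle of radius $\asymp 1-|z_k|$ that merely ``avoids the zeros''. Avoiding the zeros is not enough: the maximum principle helps only if the whole boundary circle lies in the region where the bound is already proved. For the non-separated sequences this theorem is designed for, such a circle need not exist: the hypotheses permit $\asymp\psi\bigl(\frac1{1-|z_k|}\bigr)$ zeros spread through the annulus $\frac{1-|z_k|}{4}\le|z-z_k|\le\frac{1-|z_k|}{2}$ with mutual gaps of order $(1-|z_k|)/\psi$, so every admissible circle passes within pseudohyperbolic distance $O(1/\psi)\to0$ of the zero set, and the constant-$\epsilon$ bound you rely on is unavailable there. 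The argument is repairable --- a pigeonhole choice of radius gives a circle at distance $\gtrsim(1-|z_k|)/\psi$ from all zeros, and the resulting loss in the logarithmic-derivative sums is only polynomial in $\psi$ and $\frac1{1-|z_k|}$, hence $o\bigl(\tilde\psi\bigr)$ after taking logarithms since $\tilde\psi(x)/\log x\to\infty$ and $\psi=O(\tilde\psi)$ by \eqref{e:polya} --- but this quantitative step is precisely what your sketch omits, and it is the crux. The paper sidesteps the issue entirely: it bounds $|P''/P|$ and $|P'/P|$ by a fixed negative power of $1-|z|$ outside a radial exceptional set $E_\delta$ with $m_1(E_\delta\cap[r,1))\le\delta(1-r)$ (results of \cite{ChyGuHe}, \cite{CHR1}), obtains \eqref{e:a_est_excep} off $E_\delta$, and then removes the exceptional set using only the monotonicity of $M(r,a)$, condition \eqref{e:polya}, and Lemma 4.1 of \cite{CHR1}, so that no pointwise control near the zeros is ever required.
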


\begin{cor}
 If for some $\rho>0$ and $\beta>0$ a sequence $(z_k)$ satisfies the conditions
\begin{equation*}
 \exists C>0: n_{z_k} \Bigl(  \frac{1-|z_k|}{2}\Bigr) \le C \log^\beta \frac{1}{1-|z_k|},
\end{equation*}
\begin{equation*}
 \exists C>0: N_{z_k} \Bigl(  \frac{1-|z_k|}{2}\Bigr) \le C \log^{\beta+1} \frac{1}{1-|z_k|},
\end{equation*}
then there exists a function $a$ analytic in $\D$ and  satisfying $$\log  M(r,a)=O\Bl \log^{\beta+1} \frac{1}{1-r}\Br,\quad  r\in (0,1)$$ such that
possesses a solution $f$  having zeros  precisely  at the points $z_k$, $k\in \mathbb{N}$.
\end{cor}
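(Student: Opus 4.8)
The plan is to obtain the corollary as a direct specialization of Theorem~\ref{t:zeros_de} to the weight $\psi(x)=\log^\beta x$. First I would check that this $\psi$ is admissible in the sense demanded by Theorems~\ref{t:suff_interpol2} and~\ref{t:zeros_de}: it is nondecreasing and unbounded on $[1,+\infty)$, and it satisfies the doubling condition \eqref{e:polya}, since
\[
\psi(2x)=\log^\beta(2x)=(\log 2+\log x)^\beta\sim \log^\beta x=\psi(x),\quad x\to+\infty,
\]
so in particular $\psi(2x)=O(\psi(x))$. Moreover its P\'olya order is $\rho=\rho^*[\psi]=0$: for every $\rho>0$ the inequality $\psi(Cx)=(\log C+\log x)^\beta\le C^\rho\log^\beta x$ holds as $x,C\to\infty$, so the characterization in the Remark gives $\rho^*[\psi]=0$. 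Consequently the genus in the canonical product \eqref{e:can_prod} may be taken as $s=[\rho]+1=1$.

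Next I would record the elementary identity for $\tilde\psi$. By the Remark on $\tilde\psi$, the choice $\psi(x)=\log^\beta x$ yields $\tilde\psi(x)=\frac{1}{\beta+1}\log^{\beta+1}x$. With this in hand, the two hypotheses of the corollary translate verbatim into the hypotheses of Theorem~\ref{t:suff_interpol2}. The first hypothesis reads
\[
n_{z_k}\Bl\frac{1-|z_k|}{2}\Br\le C\log^\beta\frac{1}{1-|z_k|}=C\,\psi\Bl\frac{1}{1-|z_k|}\Br,
\]
which is exactly the local counting estimate \eqref{e:nu_est_rho}; the second hypothesis reads
\[
N_{z_k}\Bl\frac{1-|z_k|}{2}\Br\le C\log^{\beta+1}\frac{1}{1-|z_k|}=C(\beta+1)\,\tilde\psi\Bl\frac{1}{1-|z_k|}\Br,
\]
which, after absorbing the factor $\beta+1$ into the constant $C$, is precisely the concentration condition \eqref{e:concentr_cond_til_psi}, the first of the three admissible alternatives in Theorem~\ref{t:suff_interpol2}.

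With all hypotheses verified, I would invoke Theorem~\ref{t:zeros_de}: it produces an analytic coefficient $a$ for which \eqref{e:oscil_eq} has a nontrivial solution $f$ vanishing precisely on $(z_k)$ and obeying
\[
\log M(r,a)\le C\,\tilde\psi\Bl\frac{1}{1-r}\Br=\frac{C}{\beta+1}\log^{\beta+1}\frac{1}{1-r}=O\Bl\log^{\beta+1}\frac{1}{1-r}\Br,\quad r\in(0,1),
\]
which is the claimed growth bound. Since the corollary is essentially immediate once the weight is fixed, the only genuinely non-formal point, and the step I expect to require the most care, is the matching of the first hypothesis with \eqref{e:nu_est_rho}: one must confirm that the normalization of the local counting function $n_{z_k}$ against $\psi=\log^\beta$ at the scale $\frac{1-|z_k|}{2}$ agrees with the form in which \eqref{e:nu_est_rho} was stated, so that the genus $s=1$ in \eqref{e:can_prod} is legitimately admissible and the canonical product converges. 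Once that bookkeeping is settled, the conclusion follows directly from Theorem~\ref{t:zeros_de}.
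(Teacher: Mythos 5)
Your overall route is the one the paper intends: the corollary is stated there without a separate proof, as an immediate specialization of Theorem~\ref{t:zeros_de} to $\psi(x)=\log^\beta x$, and your verifications of \eqref{e:polya}, of the P\'olya order $\rho^*[\psi]=0$ (hence $s=[\rho]+1=1$), of $\tilde\psi(x)=\frac{1}{\beta+1}\log^{\beta+1}x$, and of \eqref{e:concentr_cond_til_psi} are all correct, as is the final absorption of constants into the $O$-bound.

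There is, however, one genuine gap, located exactly at the spot you set aside as ``bookkeeping'', and it is not a matter of normalization or scale but of quantifiers. The corollary's first hypothesis bounds $n_{z_k}\bigl(\frac{1-|z_k|}{2}\bigr)$ only \emph{at the points of the sequence}, whereas \eqref{e:nu_est_rho} --- which Theorem~\ref{t:suff_interpol2} explicitly assumes, and which its proof uses to get $\sum_n \bigl|\frac{1-|z_n|^2}{1-\bar z_n z}\bigr|^{s+1}\le C(s)\,\tilde\psi\Bl\frac{1}{1-|z|}\Br$ at \emph{every} $z\in\D$ --- is a condition quantified over all $z\in\D$. So the hypothesis is not ``exactly'' \eqref{e:nu_est_rho}, and an argument is needed to bridge the two. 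The bridge is a standard covering argument together with \eqref{e:polya}: given $z\in\D$, cover $U\bigl(z,\frac{1-|z|}{2}\bigr)$ by an absolutely bounded number $N$ of disks $U\bigl(w_j,\frac{1-|z|}{16}\bigr)$ with $w_j\in U\bigl(z,\frac{1-|z|}{2}\bigr)$. If such a disk contains a sequence point, pick one, say $z_{m_j}$; then $|z_{m_j}-z|\le \frac{9}{16}(1-|z|)$, so $1-|z_{m_j}|\ge \frac{7}{16}(1-|z|)$, and every point of $U\bigl(w_j,\frac{1-|z|}{16}\bigr)$ lies within $\frac{1-|z|}{8}\le \frac{2}{7}(1-|z_{m_j}|)<\frac{1-|z_{m_j}|}{2}$ of $z_{m_j}$. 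Hence the number of sequence points in that disk is at most
\begin{equation*}
n_{z_{m_j}}\Bl \frac{1-|z_{m_j}|}{2}\Br \le C\,\psi\Bl \frac{1}{1-|z_{m_j}|}\Br \le C\,\psi\Bl \frac{3}{1-|z|}\Br \le C'\,\psi\Bl \frac{1}{1-|z|}\Br,
\end{equation*}
the last step by monotonicity and two applications of the doubling property \eqref{e:polya}. Summing over the $N$ covering disks yields \eqref{e:nu_est_rho} for all $z\in\D$ (and, via \cite[Lemma 9]{ChyShep}, also the convergence $\sum_n(1-|z_n|)^{s+1}<\infty$ needed for the canonical product \eqref{e:can_prod} with $s=1$). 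With this supplement inserted, your proof is complete and coincides with the paper's intended argument.
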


This corollary is sharp in the following sense
\begin{thm} \label{t:zeors_sharp_slow} For arbitrary $\eta_1, \eta_2>0$ there exists a sequence of distinct numbers $(z_n)$ in $\D$ with the following properties:
\begin{itemize}
 \item [i)] $\exists C>0$ :  $n_{z_k} \Bigl(  \frac{1-|z_k|}{2}\Bigr) \le C\log^{\eta_1} \frac{1}{1-|z_k|}$, $k\in \mathbb{N}$;
     \item [ii)] $\exists C>0$ :  $N_{z_k} \Bigl(  \frac{1-|z_k|}{2}\Bigr) \le C \log^{1+\eta_1+\eta_2} \frac{1}{1-|z_k|}$, $k\in \mathbb{N}$;
\item [iii)] $(z_k)$ cannot be the  zero sequence of a  solution of \eqref{e:oscil_eq} where $\log  M(r,a)=O(\log^{1+\eta} \frac{1}{1-r})$, $\eta<\eta_2$.
\end{itemize}
\end{thm}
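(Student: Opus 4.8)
The plan is to read (iii) as a non-existence statement and prove it by a forced lower bound on the coefficient. I will build the point set $(z_k)$ explicitly so that (i) and (ii) hold, and then show that \emph{any} nontrivial solution of \eqref{e:oscil_eq} vanishing on $(z_k)$ compels $a$ to grow at least like $\bigl(\log\frac1{1-r}\bigr)^{1+\eta_2}$ along a sequence of radii tending to $1$. Thus no admissible $a$ with $\log M(r,a)=O\bigl((\log\frac1{1-r})^{1+\eta}\bigr)$, $\eta<\eta_2$, can exist; in particular one never has to exhibit the solution $f$ or the coefficient $a$, only the sequence. The argument rests on two ingredients: a quantitative oscillation estimate converting a short distance between two zeros into a large lower bound for $|a|$, and a cluster construction producing such short distances while keeping $n_{z_k}$ and $N_{z_k}$ under the prescribed bounds.

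The oscillation estimate is a complex Lyapunov inequality. If $f$ solves \eqref{e:oscil_eq}, $f(z_1)=f(z_2)=0$ with $z_1\ne z_2$ and $f\ne0$ on the open segment $(z_1,z_2)$, then $g(t)=f\bigl(z_1+t(z_2-z_1)\bigr)$ satisfies $g''+A(t)g=0$ on $[0,1]$ with $A(t)=(z_2-z_1)^2a\bigl(z_1+t(z_2-z_1)\bigr)$ and $g(0)=g(1)=0$. Representing $g$ through the Dirichlet Green function $G(t,s)=\min(t,s)\bigl(1-\max(t,s)\bigr)$ and using $\sup_tG(t,s)=s(1-s)\le\frac14$ gives $\int_0^1|A(t)|\,dt\ge4$, hence
\begin{equation*}
 |z_2-z_1|^2\sup_{[z_1,z_2]}|a|\ge 4 .
\end{equation*}
Consequently, if two zeros lie at distance $g$ with no other zero separating them on the segment, then $M(r^*,a)\ge 4g^{-2}$ at the modulus $r^*$ of the point of the segment where $|a|$ is maximal.

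For the construction I fix a sparse sequence of radii $r_m\uparrow1$ and place near each $r_m$ a finite cluster; write $L_m=\log\frac1{1-r_m}$. The cluster is a chain of $\nu_m\asymp L_m^{\eta_1}$ collinear points whose distances from the centre form a geometric progression $\delta_m\lambda_m^{\,i}$, $0\le i<\nu_m$, where $\delta_m\le\frac12(1-r_m)$ and the ratio is chosen so that $\nu_m\log\frac1{\lambda_m}\asymp L_m^{1+\eta_2}$; the radii $r_m$ are so sparse that for every $z_k$ the only sequence points inside $\frac{1-|z_k|}2$ are those of its own cluster. A direct computation then gives, uniformly in $k$, $n_{z_k}\Bl\frac{1-|z_k|}2\Br\le\nu_m\asymp L_m^{\eta_1}$, which is (i); and, maximising over the position of the base point inside the cluster (the worst case being the innermost point), $N_{z_k}\Bl\frac{1-|z_k|}2\Br\lesssim\nu_m^2\log\frac1{\lambda_m}=\nu_m\cdot\bigl(\nu_m\log\tfrac1{\lambda_m}\bigr)\asymp L_m^{\eta_1}L_m^{1+\eta_2}=L_m^{1+\eta_1+\eta_2}$, which is (ii). The minimal gap in the cluster is $g_m=\delta_m\lambda_m^{\,\nu_m-2}(1-\lambda_m)$, whence $\log\frac1{g_m}\asymp\nu_m\log\frac1{\lambda_m}\asymp L_m^{1+\eta_2}$ (the dominant term, beating $\log\frac1{\delta_m}\asymp L_m$). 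The choice $\nu_m\equiv2$ already satisfies (i)--(ii) and yields the same gap; taking $\nu_m\asymp L_m^{\eta_1}$ merely makes both (i) and (ii) sharp at once. Since each cluster is finite and $r_m\uparrow1$, the points are distinct and accumulate only on $\partial\D$.

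Finally I combine the two ingredients. Suppose $(z_k)$ is the zero sequence of a nontrivial solution $f$ of \eqref{e:oscil_eq}. Applying the Lyapunov inequality to the closest pair of cluster $m$ — which, by minimality of $g_m$, has no zero on the open segment between its endpoints — gives $M(r_m^*,a)\ge4g_m^{-2}$ with $1-r_m^*\asymp1-r_m$, so that
\begin{equation*}
 \log M(r_m^*,a)\ge\log\Bl 4g_m^{-2}\Br=2\log\frac1{g_m}+\log4\gtrsim L_m^{1+\eta_2}\asymp\Bl\log\frac1{1-r_m^*}\Br^{1+\eta_2}.
\end{equation*}
If $\log M(r,a)=O\bigl((\log\frac1{1-r})^{1+\eta}\bigr)$ with $\eta<\eta_2$, this is impossible for large $m$, which proves (iii). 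The \textbf{main obstacle} is the construction, not the estimate: one must verify (i) and (ii) \emph{for every} base point $z_k$, which requires controlling $N_{z_k}$ centred at each point of a cluster and locating its maximum, and must tune the radii $r_m$ and the sizes $\delta_m$ so that distinct clusters do not interfere inside the windows $\frac{1-|z_k|}2$. The oscillation estimate, by contrast, is a soft and essentially classical step.
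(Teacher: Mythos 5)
Your proposal is correct, and for the crucial part iii) it takes a genuinely different route from the paper. The construction itself is the same in spirit as the paper's: the paper places at each radius $1-3^{-n}$ an \emph{equally spaced} cluster $z_{n,k}=1-3^{-n}+k\varepsilon_n/m_n$, $0\le k\le m_n-1$, with $\varepsilon_n=e^{-(n\log 3)^{1+\eta_2}}$ and $m_n=[(n\log 3)^{\eta_1}]$, so that i) and ii) follow from a two-line computation, somewhat more simply than for your geometric-progression clusters, where one must locate the worst base point (the analysis you rightly flag as the main burden; your aside that $\nu_m\equiv 2$ already meets i)--ii) is formally true, since these are only upper bounds, though it would make ii) far from saturated). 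The essential difference is in iii). The paper assumes a solution $f=Be^g$ whose coefficient obeys \eqref{e:assum_a}, bounds $T(r,f)$ by the $\varphi$-order theorem of \cite{CHR2}, passes to $\Re g$ via Lemma \ref{l:index}, then to $M(r,g)$ by Caratheodory's inequality and to $M(r,g')$ by Cauchy's formula, and finally contradicts this upper bound with a lower bound for $|g'(z_{n,0})|=\bigl|B''(z_{n,0})/(2B'(z_{n,0}))\bigr|$, computed as a sum over the zeros in which the same-cluster term $I_1\gtrsim (m_n/\varepsilon_n)\log m_n$ dominates. You replace all of this machinery with a complex Lyapunov (disconjugacy) inequality, and this step is sound: the Dirichlet Green's function representation $g(t)=\int_0^1G(t,s)A(s)g(s)\,ds$ holds for complex-valued solutions on a segment, and only $g\not\equiv 0$ is needed to cancel $\|g\|_\infty$ (this follows from isolation of zeros of a nontrivial solution, so your extra requirement that $f\ne 0$ on the open segment is harmless but superfluous); hence two zeros at distance $d$ force $\sup_{[z_1,z_2]}|a|\ge 4d^{-2}$, and the exponentially small gap, whose modulus lies at $1-r^*\asymp 1-r_m$, gives the contradiction at once. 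This is an essentially classical mechanism (the same one behind the separation assertion in Theorem \ref{t:gr19}), and it even yields slightly more than iii): the sequence cannot be a \emph{subset} of the zero set of any such solution, with no Blaschke or factorization hypothesis used. What the paper's longer route buys is coherence with the rest of the paper --- the identity $g'(z_k)=-B''(z_k)/(2B'(z_k))$ at the zeros is exactly the interpolation datum \eqref{e:g_interpol} on which Theorems \ref{t:zeros_de} and \ref{t:zeros_de1} are built, and the proof localizes the obstruction in the growth of $g'$ --- but as a proof of Theorem \ref{t:zeors_sharp_slow} your argument is shorter and more elementary, and it would apply verbatim to the paper's own arithmetic-progression clusters.
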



Since Theorem \ref{t:gr19} effectively uses the notion of the uniform density, one may ask whether it is possible to use $\mathcal{D}_\rho^+$ density to solve the Problem.
The next theorem gives an estimate of the growth of $a(z)$ under an assumption in terms of the density introduced by Borichev, Dhuez, and Kellay \cite{Borich_k}.

\begin{thm} \label{t:zeros_de1} Let $h\in C^{2}[0,1)$ be an increasing function with $h(0)=0$ and such that for $\rho(r)=(\Delta h(r))^{-\frac12}$ \eqref{e:rho_slow_var} holds and $\frac{(1-r)h'(r)}{h(r)}$ is bounded. Let the function $\sigma(r)=(1-r)^2/\rho^2(r)\nearrow \infty$ as $r\nearrow  1-$, and  satisfy  $\sigma((1+r)/2) =O(\sigma(r))$, $r\in [1/2,1)$. 
 Suppose that $\mathcal{D}_\rho^+(Z)< \infty$ and \eqref{e:rho_separ} holds.
 Then there exists an analytic function $a$ in $\D$ satisfying
\begin{equation*}
 \exists C>0: \log  M(r,a)\le C  h(r), \quad r\in (0,1)
\end{equation*}
such that \eqref{e:oscil_eq} possesses a solution $f$  having zeros  precisely  at the points $z_k$, $k\in \mathbb{N}$.
\end{thm}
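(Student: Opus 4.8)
The plan is to reduce the construction of $a$ to a single interpolation problem solvable by Theorem \ref{t:bdk}. First I would form the canonical product $P$ of \eqref{e:can_prod} with zero sequence $Z$, taking the genus $s$ large enough (the doubling $\sigma((1+r)/2)=O(\sigma(r))$ forces $\sigma$ to grow at most polynomially in $(1-r)^{-1}$, so a finite $s$ makes the product converge), and I would establish $\log M(r,P)\le C h(r)$ on $(0,1)$. Heuristically this holds because \eqref{e:rho_separ} permits at most one zero per disc $U(z,\rho(z))$ while $\Delta h=\rho^{-2}$, so the counting measure of $Z$ is dominated by $\Delta h\,dm$ and its logarithmic potential by $h$. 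I would then seek the solution in the form $f=Pe^{g}$. Since $e^{g}$ is zero-free, $f$ vanishes precisely on $Z$, with simple zeros, and a direct computation gives
\[
a:=-\frac{f''}{f}=-\Bigl(\frac{P''}{P}+2\frac{P'}{P}g'+g''+(g')^2\Bigr).
\]
The terms $g''$ and $(g')^2$ are analytic, so the only singular contribution at a zero $z_n$ is the simple pole of $\tfrac{P''}{P}+2\tfrac{P'}{P}g'$, whose residue is $\tfrac{P''(z_n)}{P'(z_n)}+2g'(z_n)$. Hence $a$ is analytic on $\D$ and $f''+af=0$ has zero set exactly $Z$ as soon as $g'$ interpolates $g'(z_n)=c_n:=-\tfrac{P''(z_n)}{2P'(z_n)}$.

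The next step is to verify that the data $(c_n)$ is admissible for Theorem \ref{t:bdk}, i.e. $\log|c_n|\le C h(z_n)$. I would bound $\log|P''(z_n)|$ from above by a Cauchy estimate over the circle $|z-z_n|=\tfrac12\rho(z_n)$, which lies well inside $\D$ since $\rho(r)/(1-r)=\sigma(r)^{-1/2}\to0$; the slow variation \eqref{e:rho_slow_var} and the boundedness of $(1-r)h'/h$ let me replace $h$ by $h(z_n)$ up to a constant, while the conditions on $\sigma$ ensure $\log\rho(z_n)^{-1}\lesssim h(z_n)$. The companion lower bound $-\log|P'(z_n)|\le C h(z_n)$ — equivalently, that $|P'(z_n)|=\bigl|\prod_{m\ne n}E(\tfrac{1-|z_m|^2}{1-\bar z_m z_n},s)\bigr|$ is not too small — is precisely a bound of the type \eqref{e:ln_min_max} and is extracted from the separation \eqref{e:rho_separ}, exactly as in Theorem \ref{t:suff_interpol2}. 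Combining the two gives $\log|c_n|\le C h(z_n)$.

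To apply Theorem \ref{t:bdk} I must upgrade the hypothesis $\mathcal{D}_\rho^+(Z)<\infty$ to the interpolation threshold $\tfrac12$. Here I would exploit the scaling of the density under dilation of the weight: replacing $h$ by $H=\lambda h$ replaces $\rho$ by $\lambda^{-1/2}\rho$, hence $\mathcal{D}_\rho^+(Z)$ by $\lambda^{-1}\mathcal{D}_\rho^+(Z)$, while \eqref{e:rho_separ} is only improved; choosing $\lambda>2\mathcal{D}_\rho^+(Z)$ brings the density below $\tfrac12$ at the sole cost of enlarging the final constant. (A preliminary regularization of $h$ to a $C^3$ weight comparable to it and meeting all structural assumptions of Theorem \ref{t:bdk} would be carried out first.) Theorem \ref{t:bdk} then yields $g'\in\mathcal{A}_H$, so $\log M(r,g')\le C h(r)$, with $g'(z_n)=c_n$. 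Because the formula for $a$ involves only $g'$ and $g''=(g')'$ — never $g$ itself — a Cauchy estimate together with the doubling $h((1+r)/2)\le C h(r)$ (again from boundedness of $(1-r)h'/h$) gives $\log M(r,g'')\le C h(r)$, and the dominant term $(g')^2$ yields $\log M(r,a)\le 2\log M(r,g')+O(1)\le C h(r)$; the contributions $P''/P$ and $(P'/P)g'$ are controlled on circles that avoid the $\rho$-separated zeros, the behaviour near the zeros being handled by the already-established analyticity of $a$ and the maximum principle.

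I expect the main obstacle to be the pair of pointwise estimates on the canonical product at its own zeros: the lower bound $-\log|P'(z_n)|\le C h(z_n)$ must be squeezed out of \eqref{e:rho_separ} by a delicate product estimate, and the upper bound on $|P''(z_n)|$ requires $h$ to vary slowly on the scale $\rho$, which is exactly the purpose of \eqref{e:rho_slow_var}, the boundedness of $(1-r)h'/h$, and the monotonicity and doubling of $\sigma$. By contrast, the density-rescaling device disposes of the gap between $\mathcal{D}_\rho^+(Z)<\infty$ and the interpolation threshold almost for free.
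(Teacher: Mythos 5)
Your reduction coincides with the paper's skeleton: both form the canonical product $P$ of \eqref{e:can_prod}, write $f=Pe^{g}$, observe that $a$ is analytic exactly when $g'(z_n)=-P''(z_n)/(2P'(z_n))=:c_n$, bound the data $|c_n|$ by a Cauchy estimate for $P''$ together with a lower bound for $|P'(z_n)|$ of the type \eqref{e:ln_min_max}, and finish with logarithmic-derivative bounds plus monotonicity of $M(r,a)$. The genuine divergence is the interpolation step, and that is where your proposal has a gap. The paper never invokes Theorem \ref{t:bdk}: it sets $\psi(t)=t^{-2}\Delta h(r)\bigr|_{r=1-t^{-1}}$, so that $\psi\bigl(\frac{1}{1-r}\bigr)=\sigma(r)$, uses the monotonicity and doubling of $\sigma$ to check that $\psi$ is nondecreasing and satisfies \eqref{e:polya}, derives the bounds $n_z\bigl(\frac{1-|z|}{2}\bigr)\le C\psi\bigl(\frac{1}{1-|z|}\bigr)$ and $N_{z_k}\bigl(\frac{1-|z_k|}{2}\bigr)\le C\psi\bigl(\frac{1}{1-|z_k|}\bigr)$ from $\mathcal{D}_\rho^+(Z)<\infty$ together with \eqref{e:rho_separ}, shows $\tilde\psi\bigl(\frac{1}{1-r}\bigr)=O(h(r))$ by integration by parts and the boundedness of $(1-r)h'(r)/h(r)$, and then applies its own Theorem \ref{t:sufficient_cond_interpol}. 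Your route instead calls Theorem \ref{t:bdk} after the rescaling $H=\lambda h$, $\rho_H=\lambda^{-1/2}\rho$, $\mathcal{D}_{\rho_H}^+(Z)=\lambda^{-1}\mathcal{D}_\rho^+(Z)$ (this scaling computation is correct, and the trick is elegant).

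The gap: Theorem \ref{t:bdk} demands structural hypotheses that Theorem \ref{t:zeros_de1} does not supply, namely $h\in C^3([0,1))$, $\rho(r)\searrow 0$, $\rho'(r)\to 0$, and in addition either $\rho(r)(1-r)^{-c}$ increasing for some finite $c$ or $\rho'(r)\log\rho(r)\to 0$; here $h$ is only $C^2$ with \eqref{e:rho_slow_var}, bounded $(1-r)h'/h$, and monotone, doubling $\sigma$. Your parenthetical ``preliminary regularization'' is therefore the actual crux, not a formality: one must manufacture $H$ with $\Delta H\asymp\Delta h$ (comparability at the level of Laplacians is essential, since otherwise the density and separation hypotheses do not transfer) meeting all of the above. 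Some pieces come for free -- monotonicity of $\sigma$ forces $\Delta h=\sigma(r)(1-r)^{-2}$ to be nondecreasing, hence $\rho\searrow 0$ -- but smoothing to $C^3$ with $\rho_H'\to 0$ and arranging the ``either/or'' tail condition require a genuine construction exploiting the doubling of $\sigma$; avoiding exactly this is why the paper routes through its own interpolation theorem, and the Remark following the proof states explicitly that the authors do not use the interpolation machinery of \cite{Borich_k}. A second, smaller flaw: circles avoiding all zeros at distance $\gtrsim\rho$ do not exist in general -- an annulus of width $\asymp 1-r$ contains $\asymp(1-r)^2/\rho^2$ zeros, which block radial measure $\asymp(1-r)^2/\rho\gg 1-r$ -- so your final step should instead quote the logarithmic-derivative estimates valid off small exceptional sets of radii (\cite{ChyGuHe}, \cite{CHR1}), combined with monotonicity of $M(r,a)$ and condition \eqref{e:polya}, which is precisely how the paper concludes.
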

\begin{rmk} Note that the assumption $(1-r)/\rho(r)\to \infty$ as $r\to 1-$ implies that $h(r)/\log  \frac{1}{1-r}\to \infty$ as $r\to 1-$. On the other hand,
$\frac{(1-r)h'(r)}{h(r)}$  provides that $h(r)(1-r)^{q}$ is bounded for some finite $q>0$.
\end{rmk}
\section{Proofs of the results}
\begin{proof}[Proof of Theorem \ref{t:suff_interpol2}]
  We follow the scheme of the proof of Theorem \ref{t:sufficient_cond_interpol} from \cite{ChyShep}.
  It follows from the estimate \eqref{e:nu_est_rho} and \cite[Lemma 9]{ChyShep} that
$$\sum_{n=1}^\infty \Bigl|\frac{1-|z_n|^2}{1-\bar z_n z}\Bigr|^{s+1}\le C(s) \tilde \psi\Bl \frac 1{1-|z|}\Br, \quad z\in \mathbb{D}.$$

The following two lemmas are important in our arguments.
\begin{alemma}[\cite{ChyShep}]  \label{l:index}
For an arbitrary  $\delta\in (0,1)$, any sequence $Z$ in $\D$ satisfying $\sum_{z_k\in Z} (1-|z_k|)^{s+1}< \infty$, $s\in \Z_+$  there exists a positive constant  $C(\delta, s)$
$$| \log  |B_k(z_k)| + N_{z_k} (\delta(1-|z_k|))|\le C(\delta, s)\sum_{n=1}^\infty \Bigl|\frac{1-|z_n|^2}{1-\bar z_n z}\Bigr|^{s+1},\quad k\to+\infty.$$
\end{alemma}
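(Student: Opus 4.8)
The plan is to prove the estimate termwise. Writing $w_n=\frac{1-|z_n|^2}{1-\bar z_n z_k}$, so that $\log|B_k(z_k)|=\sum_{n\ne k}\log|E(w_n,s)|$, I would first record two elementary facts. Integration by parts (the co-area identity) turns the logarithmic integral into a sum,
$$N_{z_k}(\delta(1-|z_k|))=\sum_{0<|z_n-z_k|\le \delta(1-|z_k|)}\log\frac{\delta(1-|z_k|)}{|z_n-z_k|},$$
where the $-1$ and the positive part in the definition of $N_{z_k}$ exactly remove the contribution of the point $z_k$ itself. Next, a direct computation gives $1-w_n=\frac{\bar z_n(z_n-z_k)}{1-\bar z_n z_k}$, hence $|1-w_n|=|z_n|\,\sigma(z_n,z_k)$, which links the Weierstrass factor to the pseudohyperbolic distance. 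With these identities the goal becomes the uniform per-index bound $|u_n|\le C(\delta,s)|w_n|^{s+1}$ for
$u_n:=\log|E(w_n,s)|+\log\frac{\delta(1-|z_k|)}{|z_n-z_k|}\,\mathbf{1}_{\{|z_n-z_k|\le \delta(1-|z_k|)\}}$, since summing over $n\ne k$ yields the assertion.

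I would then split according to the position of $z_n$. For a \emph{near} point, $|z_n-z_k|\le\delta(1-|z_k|)$, the elementary comparison $|1-\bar z_n z_k|\asymp 1-|z_k|$ (constants depending on $\delta$) gives both $|w_n|\asymp 1$ and, after combining $\log\sigma(z_n,z_k)$ with $\log\frac{\delta(1-|z_k|)}{|z_n-z_k|}$, the identity $u_n=\log|z_n|+\log\frac{\delta(1-|z_k|)}{|1-\bar z_n z_k|}+P_s(w_n)$, where $P_s(w)=\mathrm{Re}\sum_{j=1}^s w^j/j$. Each summand is $O(1)$ (the first tends to $0$, the second is controlled by $|1-\bar z_n z_k|\asymp 1-|z_k|$, the third because $|w_n|\le 2$), so $|u_n|\le C(\delta,s)\le C(\delta,s)|w_n|^{s+1}$ thanks to $|w_n|\asymp 1$.

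For a \emph{far} point, $|z_n-z_k|>\delta(1-|z_k|)$, we have $u_n=\log|E(w_n,s)|$ and I would invoke the classical Weierstrass estimates: if $|w_n|\le\frac12$, then $\log|E(w_n,s)|=-\mathrm{Re}\sum_{j>s}w_n^j/j$ gives directly $|u_n|\le C|w_n|^{s+1}$; if $|w_n|>\frac12$, I would write $\log|E(w_n,s)|=\log|1-w_n|+P_s(w_n)$, bound $|P_s(w_n)|\le C(s)$ via $|w_n|\le 2$, and show $\log|1-w_n|=\log(|z_n|\,\sigma(z_n,z_k))$ is bounded. The delicate point, and the main obstacle, is the lower bound $\sigma(z_n,z_k)\ge c(\delta)>0$ for such far points with $|w_n|>\frac12$. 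I would get it from a dichotomy based on $1-\sigma^2=\frac{(1-|z_n|^2)(1-|z_k|^2)}{|1-\bar z_n z_k|^2}$: if $z_n$ lies deep inside relative to $z_k$, say $1-|z_n|>8(1-|z_k|)$, then $|1-\bar z_n z_k|\ge 1-|z_n|$ forces $\sigma(z_n,z_k)^2\ge\frac12$; whereas if $1-|z_n|\le 8(1-|z_k|)$, the inequality $|1-\bar z_n z_k|<2(1-|z_n|^2)$ coming from $|w_n|>\frac12$ together with $|z_n-z_k|>\delta(1-|z_k|)$ yields $\sigma(z_n,z_k)>\delta/32$. In either case $|1-w_n|$ is bounded below (the finitely many $z_n$ with $|z_n|\le\frac12$ may be discarded), so $|u_n|\le C(\delta,s)\le C(\delta,s)|w_n|^{s+1}$ using $|w_n|>\frac12$.

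Summing the uniform termwise bound over $n\ne k$ produces the right-hand side $C(\delta,s)\sum_{n}|w_n|^{s+1}$, completing the argument. The real work is concentrated in the metric comparison of the Euclidean ball $\{|z_n-z_k|\le\delta(1-|z_k|)\}$ with the region $\{|w_n|>\frac12\}$ in the transition zone and in the attendant bookkeeping of constants; everywhere else the estimates reduce to the standard Weierstrass-factor bounds and the factorization $|1-w_n|=|z_n|\sigma(z_n,z_k)$.
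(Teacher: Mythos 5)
Your proof is correct, but note that the paper itself contains no proof of this statement: Lemma \ref{l:index} is imported from \cite{ChyShep} as a quoted result, so the only comparison available is with that source, and the route you take --- splitting $\log|B_k(z_k)|=\sum_{n\ne k}\log|E(w_n,s)|$ at the scale $|z_n-z_k|\le\delta(1-|z_k|)$ and estimating primary factors termwise --- is the standard one for such index lemmas and matches it in structure. The key checkpoints in your write-up all verify (reading $z=z_k$ on the right-hand side, which is an evident typo in the statement): the counting identity for $N_{z_k}(\delta(1-|z_k|))$, where the $-1$ and the positive part indeed just delete the point $z_k$ because the points are distinct; the factorization $|1-w_n|=|z_n|\,\sigma(z_n,z_k)$; in the near case the bounds $1-|z_k|\le|1-\bar z_n z_k|\le 3(1-|z_k|)$ and $(1-\delta)(1-|z_k|)\le 1-|z_n|\le 2(1-|z_k|)$, which give $|w_n|\asymp 1$ with constants depending only on $\delta$ and make your three-term decomposition of $u_n$ bounded; the tail estimate $|\log|E(w_n,s)||\le 2|w_n|^{s+1}$ for $|w_n|\le\frac12$; and the dichotomy for far points with $|w_n|>\frac12$, where the identity $1-\sigma^2(z_n,z_k)=\frac{(1-|z_n|^2)(1-|z_k|^2)}{|1-\bar z_n z_k|^2}$ yields $\sigma^2\ge\frac12$ when $1-|z_n|>8(1-|z_k|)$, while $|1-\bar z_n z_k|<2(1-|z_n|^2)\le 32(1-|z_k|)$ together with $|z_n-z_k|>\delta(1-|z_k|)$ yields $\sigma>\delta/32$ otherwise. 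One point should be made explicit rather than hidden in ``may be discarded'': for the finitely many $z_n$ with $|z_n|\le\frac12$, the termwise bound holds only with a constant depending on those points (essentially on $\log\frac1{|z_n|}$), so the final constant depends on $Z$ and not only on $(\delta,s)$. This is a defect of the lemma's notation rather than of your argument: if some $z_n$ lies very close to the origin, then $\log|E(w_n,s)|$ is close to $\log|z_n|$, hence arbitrarily large in modulus, while the corresponding right-hand term is $O(1)$, so no constant depending only on $(\delta,s)$ can serve uniformly over all admissible sequences; the statement survives precisely because it is asserted asymptotically as $k\to+\infty$, i.e.\ with a constant allowed to absorb the finitely many near-origin terms, which is exactly how you use it.
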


The next proposition compares some conditions frequently used in interpolation problems (cf. \cite{BerTay}).
\begin{alemma}[{\cite[Proposition 11]{ChyShep}}] \label{p:1}
Given  a function $\psi \in \mathcal{R}$ for $$ \exists C>0\ \forall z
\in \mathbb{D}:\  N_z\Bl\frac {1-|z|}2\Br\leq  C\psi\Bl \frac 1{1-|z|}\Br
$$
it is necessary and sufficient that
\begin{equation}\label{e:nu_est_rho}
 \exists C>0\ \forall z
\in \mathbb{D}:     n_{z} \Bl \frac{1-|z|}2\Br \le  C\psi\Bl \frac 1{1-|z|}\Br,
\end{equation}
and
\begin{equation}\label{e:ln_prime}
 \forall n\in \mathbb{N}:\;    |\log  \bigl((1-|z_n|)|P'(z_n)|\bigr)| \le  C\psi\Bl \frac 1{1-|z_n|}\Br,
 \end{equation}
where $P$ is the canonical product defined by (\ref{e:can_prod}), $s=[\rho^*]+1$, where $\rho^*$ is Polya's order of $\psi$.
\end{alemma}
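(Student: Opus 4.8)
The plan is to reduce both implications to a single two-sided estimate holding \emph{at} the points $z_k$, and then to transfer that estimate to all $z\in\D$. The whole argument rests on two inputs. The first is Lemma~\ref{l:index} taken with $\delta=1/2$, which, writing $\delta_k:=1-|z_k|$ and $\Sigma_k:=\sum_{n}\bigl|\frac{1-|z_n|^2}{1-\bar z_n z_k}\bigr|^{s+1}$, gives
\[
\bigl|\log|B_k(z_k)|+N_{z_k}\Bl\tfrac{\delta_k}2\Br\bigr|\le C(s)\,\Sigma_k .
\]
The second is the estimate recorded at the start of the proof of Theorem~\ref{t:suff_interpol2}: under \eqref{e:nu_est_rho}, with \cite[Lemma 9]{ChyShep}, one has $\Sigma_z\le C\tilde\psi\Bl\frac1{1-|z|}\Br$ for every $z$, and since $\psi\in\mathcal R$ (so $\tilde\psi=O(\psi)$) this upgrades to $\Sigma_z\le C\psi\Bl\frac1{1-|z|}\Br$. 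Thus, \emph{whenever} \eqref{e:nu_est_rho} holds, $\log|B_k(z_k)|$ and $-N_{z_k}(\delta_k/2)$ differ by at most $C\psi(1/\delta_k)$.

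Next I would record the elementary link between $P'$ and $B_k$. From $E(w,s)=(1-w)\exp\{\sum_{j\le s}w^j/j\}$ one gets $\partial_wE(w,s)=-w^s\exp\{\sum_{j\le s}w^j/j\}$, while the factor $E\bigl(\frac{1-|z_k|^2}{1-\bar z_k z},s\bigr)$ of $P$ vanishes exactly at $z=z_k$, where its argument equals $1$. Differentiating $P=E(\,\cdot\,,s)B_k$ and using $\frac{d}{dz}\bigl(\frac{1-|z_k|^2}{1-\bar z_k z}\bigr)=\frac{\bar z_k}{1-|z_k|^2}$ at $z=z_k$ yields
\[
(1-|z_k|)|P'(z_k)|=e^{H_s}\,\frac{|z_k|}{1+|z_k|}\,|B_k(z_k)|,\qquad H_s=\sum_{j=1}^s\tfrac1j,
\]
so that $\log\bigl((1-|z_k|)|P'(z_k)|\bigr)=\log|B_k(z_k)|+O(1)$ uniformly in $k$ (assuming $0\notin Z$; the finitely many $z_k$ near the origin are absorbed into the constant). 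Combined with the previous paragraph, this shows that, \emph{provided} \eqref{e:nu_est_rho} holds, condition \eqref{e:ln_prime} is equivalent to the pointwise bound $N_{z_k}(\delta_k/2)\le C\psi(1/\delta_k)$, because both $\log|B_k(z_k)|$ and $\log((1-|z_k|)|P'(z_k)|)$ equal $-N_{z_k}(\delta_k/2)+O(\psi)$ and $N_{z_k}\ge0$.

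For necessity, assume $N_z(\frac{1-|z|}2)\le C\psi(\frac1{1-|z|})$ for all $z$. To extract \eqref{e:nu_est_rho} I use $N_z(\frac{1-|z|}2)\ge(n_z(\frac{1-|z|}4)-1)^+\log2$, giving $n_z(\frac{1-|z|}4)\le C\psi+1$; covering $U(z,\frac{1-|z|}2)$ by a bounded number of discs $U(\zeta_i,\frac{1-|\zeta_i|}4)$ with $1-|\zeta_i|\asymp1-|z|$ and invoking the monotonicity and P\'olya regularity of $\psi$ then yields \eqref{e:nu_est_rho}. With \eqref{e:nu_est_rho} in force and the hypothesis applied at $z=z_k$ giving $N_{z_k}(\delta_k/2)\le C\psi$, the equivalence of the previous paragraph delivers \eqref{e:ln_prime}.

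For sufficiency, assume \eqref{e:nu_est_rho} and \eqref{e:ln_prime}; the equivalence gives the pointwise bound $N_{z_k}(\delta_k/2)\le C\psi(1/\delta_k)$ at every point of $Z$, and it remains to promote this to $N_z(\frac{1-|z|}2)\le C\psi(\frac1{1-|z|})$ for arbitrary $z$. If $U(z,\frac{1-|z|}2)$ meets no point of $Z$ the bound is trivial; otherwise let $z_k$ be the nearest point of $Z$, $t_1=|z-z_k|\le\frac{\delta}2$ with $\delta:=1-|z|$, so $\delta_k\asymp\delta$. Every point contributing to $N_z(\delta/2)$ lies in $U(z,\delta/2)$ and hence has $1-|\zeta|\ge\delta/2$; writing $\tilde n,\tilde N$ for the counting functions restricted to such points, the inclusion $U(z,t)\subset U(z_k,t+t_1)$ gives $N_z(\delta/2)\le 2\tilde N_{z_k}(\delta)$. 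I then split $\tilde N_{z_k}(\delta)$ at scale $\delta_k/2$: the inner part is $\le N_{z_k}(\delta_k/2)\le C\psi$, while the outer part is $\le(\tilde n_{z_k}(\delta)-1)^+\log\frac{2\delta}{\delta_k}$, with the truncated count of $Z$ in the interior region $U(z_k,\delta)\cap\{1-|\zeta|\ge\delta/2\}$ bounded by $C\psi$ via the same covering as above. I expect this transfer from the points $z_k$ to arbitrary $z$ to be the main obstacle: the truncation is essential, since the naive bound $N_z(\delta/2)\le2N_{z_k}(\delta)$ fails because $U(z_k,\delta)$ reaches the boundary and may engulf a cluster of $Z$ invisible from $z$, whereas the interior region stays at scale $\asymp\delta$ and is governed by \eqref{e:nu_est_rho}. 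The derivative identity and the application of Lemma~\ref{l:index} are then routine, once $\psi\in\mathcal R$ is used to replace $\tilde\psi$ by $\psi$ throughout.
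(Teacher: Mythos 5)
The paper never proves this lemma---it is quoted verbatim from \cite[Proposition 11]{ChyShep}---so the only thing to compare against is the reasoning the paper assembles around it, and by that measure your proof is correct and uses exactly the same machinery. Your core mechanism is the one the paper itself deploys immediately after stating the lemma (in the proof of Theorem \ref{t:suff_interpol2}): Lemma \ref{l:index} controls $\bigl|\log|B_k(z_k)|+N_{z_k}\bigl(\frac{1-|z_k|}{2}\bigr)\bigr|$ by the sum $\Sigma_{z_k}$, which under \eqref{e:nu_est_rho} and \cite[Lemma 9]{ChyShep} is $O(\tilde\psi)=O(\psi)$ (this is where $\psi\in\mathcal{R}$ enters), and your differentiation identity $(1-|z_k|)|P'(z_k)|=e^{H_s}\frac{|z_k|}{1+|z_k|}|B_k(z_k)|$ is precisely \eqref{e:p'est}; together these give, under \eqref{e:nu_est_rho}, the equivalence of \eqref{e:ln_prime} with the bound $N_{z_k}\le C\psi$ at the points of $Z$. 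What you add---and what the citation hides---are the two transfer arguments forced by the fact that the $N_z$-condition and \eqref{e:nu_est_rho} are quantified over \emph{all} $z\in\D$ while \eqref{e:ln_prime} lives only on $Z$: for necessity, extracting \eqref{e:nu_est_rho} from $N_z\bigl(\frac{1-|z|}{2}\bigr)\ge\bigl(n_z\bigl(\frac{1-|z|}{4}\bigr)-1\bigr)^+\log 2$ plus a bounded covering, and for sufficiency, promoting the bound from the nearest $z_k$ to an arbitrary $z$ via the truncated counting function. I verified the delicate points of the latter: since $z_k$ is nearest, $(n_z(t)-1)^+=0$ for $t<t_1=|z-z_k|$, so $1/t\le 2/(t+t_1)$ on the integration range and $N_z(\delta/2)\le 2\tilde N_{z_k}(\delta)$ holds; the inner piece is the hypothesis at $z_k$, the outer piece is $(\tilde n_{z_k}(\delta)-1)^+\log 4$ with the truncated count controlled by \eqref{e:nu_est_rho} and a covering; and your diagnosis that the untruncated $N_{z_k}(\delta)$ is \emph{not} controlled (because $U(z_k,\delta)$ can reach the boundary) is exactly why the truncation is essential. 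Two cosmetic caveats: the doubling $\psi(2x)=O(\psi(x))$ that your coverings use is not automatic for $\psi\in\mathcal{R}$ (e.g.\ $\psi(x)=e^x$ lies in $\mathcal{R}$) but is implicit in the lemma through the finiteness of the P\'olya order $\rho^*$, needed anyway for $s$ to be defined; and absorbing the additive $O(1)$'s into $C\psi$ tacitly assumes $\psi$ is bounded below by a positive constant, which is the intended normalization.
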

 Lemma \ref{l:index} directly implies that under the assumption \eqref{e:nu_est_rho} the conditions \eqref{e:concentr_cond_til_psi} and \eqref{e:ln_min_max} are equivalent.

Moreover, one has
\begin{equation}
   \frac{P'(z_n)(1-|z_n|^2)}{\bar z_n}= -B_n(z_n) \exp\Bigl\{1+\frac 12+\dots +\frac 1s\Bigr\}, \label{e:p'est}
\end{equation}
which yields equivalence of \eqref{e:ln_prime} and \eqref{e:ln_min_max}.
Therefore, under assumption \eqref{e:nu_est_rho} any of the hypotheses \eqref{e:concentr_cond_til_psi}--\eqref{e:ln_min_max} imply the other two. Hence, we may assume that the conditions \eqref{e:concentr_cond_til_psi}--\eqref{e:ln_min_max} hold.
Taking into account \cite[Lemma 9]{ChyShep}, which gives an upper estimate of a canonical product,
and  Lemma  \ref{l:index} we deduce
\begin{equation}\label{e:error_est_ro}
|\log  |B_k(z_k)|| \le C(\delta,s) \tilde \psi \Bl \frac 1{1-|z_k|}\Br, \quad k\to+\infty.
\end{equation}

Consider the interpolating  function
\begin{equation*}\label{e:inter_func}
    f(z)=\sum_{n=1}^\infty \frac {b_n}{z-z_n} \frac{P(z)}{P'(z_n)}\Bl \frac{1-|z_n|^2}{1-\bar z_n z}\Br^{s_n-1},
\end{equation*}
where is an appropriate increasing sequence of natural numbers  $(s_n)$ (see \cite{ChyShep}). It is not hard to check that \eqref{e:inter_prob} holds.

Simple arguments (\cite[p.328]{ChyShep}) yield
\begin{gather*}\nonumber
\Bm \frac {1-\bar z_n z}{\bar z_n} \frac {P(z)}{z-z_n}\Bm
\le \exp\Bigl\{C \tilde \psi \Bl\frac {1}{1-|z|}\Br\Bigr\}.
\label{e:growth_interpol_est}
\end{gather*}
Therefore, using our assumption on $(b_n)$ and the above estimates we deduce
\begin{gather} \nonumber
    |f(z)|=\biggl| \sum_{n=1}^\infty b_n \frac{P(z)(1-\bar z_n z)}{\bar z_n (z-z_n)} \frac{\bar z_n}{(1-|z_n|^2)P'(z_n)} \Bl \frac{1-|z_n|^2}{1-\bar z_n z}\Br^{s_n}\biggr| \le \\
    \le \sum_{n=1}^\infty \exp\Bigl\{ C\tilde \psi \Bl \frac {1}{1-|z_n|}\Br \Bigr\} \exp\Bigl\{  {C}\tilde \psi \Bl \frac1{1-|z|}\Br\Bigr \} \frac 1{|B_n(z_n)|} \Bl \frac{1-|z_n|^2}{|1-\bar z_n z|}\Br^{s_n} \le \nonumber
        \\ \le \exp\Bigl\{ C \tilde \psi\Bl\frac {1}{1-|z|}\Br \Bigr\} \sum_{n=1}^\infty \exp\Bigl\{ C \tilde \psi\Bl\frac {1}{1-|z_n|}\Br \Bigr\}  \Bl \frac{1-|z_n|^2}{|1-\bar z_n z|}\Br^{s_n}. \label{e:|f|_est}
\end{gather}

The last estimate coincides with inequality (24) from \cite{ChyShep}, and the end of the proof are in lines of that from \cite[pp. 328--329]{ChyShep}.
\end{proof}

\begin{proof}[Proof of Theorem \ref{t:zeors_sharp_slow}]
 Let $\eta_1, \eta_2 >0$ be given. Let $\varepsilon_n=  e^{-(n \log  3)^{1+\eta_2}}$, $m_n=[(n\log  3)^{\eta_1}]$, $n\in \N$.
  Let $(z_{n,k})$ be the sequence defined  by $$z_{n,k}=1-3^{-n}+k\varepsilon_n/m_n, \quad 0\le k \le m_n-1.$$
Then  $$n_{z_{n,k}}(t)=m_n\asymp \log^{\eta_1}\frac 1{1-|z_{n,k}|}, \quad \varepsilon_n\le t\le  \frac{1-|z_{n,k}|}{2}, \; 0\le k\le m_n-1,\; n \in \N,$$
and the assertion i) holds.

Further,
\begin{equation}\label{e:N_z_low}
  \int_{\varepsilon_n}^{\frac{1-|z_{n,k}|}{2}} \frac{(n_{z_{n,k}}(t)-1)^+}{t}\, dt \asymp m_n\log  \frac{1-|z_{n,k}|}{2\varepsilon_n}\asymp (n\log  3)^{\eta_1+1+\eta_2},
\end{equation}
for $0\le k\le m_n-1$, $n \in \N$. On the other hand,  for the same range of $n$ and $k$ we have
\begin{equation}\label{e:N_z_up}
  \int_0^{\varepsilon_n} \frac{(n_{z_{n,k}}(t)-1)^+}{t}\, dt\le \int_0^{\varepsilon_n}
\frac  {2tm_n}{\varepsilon_nt} dt=2m_n\asymp  (n\log  3)^{\eta_1}.
\end{equation}
Combining \eqref{e:N_z_low} and \eqref{e:N_z_up} we deduce
\begin{gather*}
 N_{z_{n,k}} \Bigl( \frac{1-|z_{n,k}|}{2}\Bigr)=\biggl(  \int_0^{\varepsilon_n}+  \int_{\varepsilon_n}^{\frac{1-|z_{n,k}|}{2}} \biggr)  \frac{(n_{z_{n,k}}(t)-1)^+}{t}\, dt\asymp (n\log  3)^{\eta_1+1+\eta_2}\sim \\ \sim \log ^{\eta_1+1+\eta_2} \frac{1}{1-|z_{n,k}|},\quad  0\le k\le m_n-1, n \to \infty.
\end{gather*}

Thus, assertion ii) is proved.

To prove assertion iii) we assume on the contrary that there exists a solution $f=Be^g$ of \eqref{e:oscil_eq} having the zero sequence $(z_n)$, where $B$ is the Blaschke product, and such that
\begin{equation}\label{e:assum_a}
\log  M(r,a)\le C_0 \log  ^{1+\eta} \frac 1{1-r},\quad r\in [0,1),
\end{equation}
where $ \eta_2>\eta>0$ and $C_0$ is a positive constant.                                                                                                                                                                                                                    The function $\varphi(r)=\exp{\log ^{1+\eta} \frac 1{1-r}}$ has infinite logarithmic order, that is $$\limsup_{r\to1-} \frac{\log  \varphi(r)}{\log  \log  \frac 1{1-r}}=\infty.$$ Using the notation from \cite{CHR2}
\begin{gather*}
  \sigma_\varphi(M_{1/2}(r,a)^{1/2}(1-r)):= \limsup_{r\to1-} \frac{\log  (M_{1/2}(r,a)^{1/2}(1-r))}{\log  \varphi(r)} =\\ = \limsup_{r\to1-} \frac{\log  \Bigl(\frac1{2\pi} \int_{0}^{2\pi} |a(re^{i\theta})|^{\frac 12}\, d\theta  (1-r)\Bigr) }{\log  ^{1+\eta} \frac{1}{1-r}} \le \\
  \le  \limsup_{r\to1-} \frac{\frac 12 \log  M(r,a)+ \log  (1-r)}{\log  ^{1+\eta} \frac{1}{1-r}}\le
  \limsup_{r\to1-} \frac{\frac {C_0}2 \log ^{1+\eta}\frac 1{1-r}+ \log  (1-r)}{\log  ^{1+\eta} \frac{1}{1-r}}=\frac {C_0}2.
\end{gather*}
Then by \cite[Theorem 1]{CHR2} $\limsup_{r\to1-} \frac{\log  T(r,f)}{\log  \varphi(r)}\le \frac {C_0}2.$
Hence
\begin{gather}
\nonumber
  \log  \log  M(r,f) \le \log \Bigl(  T\Bigl( \frac{1+r}{2},f\Bigr)\frac{2}{1-r}\Bigr) \le\\ \le  \Bigl(\frac {C_0}2 +o(1) \Bigr)\log  \varphi(r)\sim \frac {C_0}2 \log ^{1+\eta} \frac{1}{1-r}, \quad r\to1-.\label{e:max_est}
\end{gather}



We write $R_n=1- 2 \cdot 3^{-n-1}$ and $\delta=\frac 14$. Taking into account that $n_{R_n e^{i\theta}}\Bigl(\frac{1-R_n}4\Bigr)=N_{R_n e^{i\theta}}\Bigl(\frac{1-R_n}4\Bigr)=0$, and applying Lemma \ref{l:index} we deduce that \begin{gather}\nonumber
    \Re g(R_ne^{i\theta}) \le \log  M(R_n, f)+|\log  |B(R_n e^{i\theta})|| \le \\ \nonumber \le \log  M(R_n, f) + C\Bigl(\frac 14, 1\Bigr) \sum_{j,k}
 \frac{1-|z_{j,k}|^2}{|1-z_{j,k} R_n e^{i\theta}|} \le \\  \nonumber
 \le e^{ \bigl( \frac {C_0}2+o(1))\bigr) \log  ^{1+\eta} \frac1{1-R_n}} +
 C\Bigl(\frac 14, 1\Bigr) \sum_{|z_{j,k}|\le R_n} 1+ \frac1{1-R_n} \sum_{|z_{j,k}|> R_n} (1-|z_{j,k}|) \le \\ \le\exp \Bigl\{ \Bl\frac {C_0}2+o(1)\Br \log  ^\eta \frac1{1-R_n}\Bigr\} + O\Bigl(\log ^{1+\eta_1}  \frac1{1-R_n}\Bigr) \sim \nonumber \\ \sim  \exp \Bigl\{ \Bigl(\frac {C_0}2+o(1)\Bigr) \log  ^\eta \frac1{1-R_n}\Bigr\}
,\quad  n\to +\infty . \label{e:max_re_g_est}
     \end{gather}
Since $\Re g     $ is harmonic,  $B(r, \Re g)=\max\{ \Re g(re^{i\theta}): \theta \in [0, 2\pi]\}$ is an increasing function.
It follows from \eqref{e:max_re_g_est} and the relation $1-R_n\asymp 1-R_{n+1}$ that
$$ B(r,\Re  g)\le  \exp \Bigl\{ \Bl\frac {C_0}2+o(1)\Br \log  ^{1+\eta} \frac1{1-r}\Bigr\}, \quad r\to1-.$$
The last estimate and Caratheodory's inequality (\cite[Chap.1, \S 6]{Le}) imply
\begin{equation*}
     M(r,g) \le \exp \Bigl\{ \Bl\frac {C_0}2+o(1)\Br \log  ^{1+\eta} \frac1{1-r}\Bigr\}, \quad r\to 1-.
\end{equation*}
Then, applying Cauchy's integral formula, we obtain
\begin{equation*}
     M(r,g') \le \frac{C}{1-r}\exp \Bigl\{ \Bl\frac {C_0}2+o(1)\Br \log  ^{1+\eta} \frac1{1-r}\Bigr\}, \quad r\uparrow1,
\end{equation*}
which yields
\begin{equation}\label{e:upp_est_solut}
     \log M(r,g') \le \Bl\frac {C_0}2+o(1)\Br \log  ^{1+\eta} \frac1{1-r}, \quad r\to 1-.
\end{equation}

We write (cf. \cite{Heit_cmft})
\begin{gather}\nonumber
-g'(z_{n,0})= \frac{B''(z_{n,0})}{2B'(z_{n,0})}=\sum_{k=1}^{m_n-1}\frac{1}{z_{n,0} -z_{n,k}} \frac{1-|z_{n,k}|^2}{1-\bar{z}_{n,k} z_{n,0}}+ \\ +\sum_{j\ne n} \sum_{k=0}^{m_j-1}\frac{1}{z_{j,k} -z_{n,0}} \frac{1-|z_{j,k}|^2}{1-\bar{z}_{j,k} z_{n,0}} =: I_1+I_2.
\end{gather}
It is easy to see that
\begin{gather}\nonumber
  |I_1|\ge \frac12 \frac {m_n}{\varepsilon_n} \sum_{k=1}^{m_{n}-1} \frac 1k\ge \\ \ge  C \exp \Bigl\{ \log  ^{\eta_2+1} \frac1{1-|z_{n,0}|}\Bigr\} \log ^{\eta_1} \frac 1{1-|z_{n,0}|} \log \log  \frac 1{1-|z_{n,0}|}, \quad n\to+\infty.\label{e:i_1_low}
\end{gather}
Then
\begin{gather*}
 |I_2|\le \sum_{j\ne n} \sum_{k=0}^{m_j-1} \frac{1}{|z_{j,k} -z_{n,0}|} \frac{1-|z_{j,k}|^2}{|1-\bar{z}_{j,k} z_{n,0}|}\le \\ \le \sum_{j=1}^{n-1} \sum_{k=0}^{m_j-1} \frac{4}{1-|z_{j,0}|} +\sum_{j=n+1}^\infty \sum_{k=0}^{m_j-1} \frac{ 4(1-|z_{j,k}|)}{(1-|z_{n,0}|)^{2}} \le \\ \le
 4\sum_{j=1}^{n-1} 3^{j+1} (j\log  3)^{\eta_1} +
\frac{4}{(1-|z_{n,0}|)^{2}}  \sum_{j=n+1}^\infty 3^{-j} (j\log  3)^{\eta_1}\le\\ \le  C 3^{n} (n\log  3)^{\eta_1}+ \frac{C}{(1-|z_{n,0}|)^2} \frac{(n\log  3)^{\eta_1}}{3^n} \le \frac{C\log  ^{\eta_1} \frac{1}{1-|z_{n,0}|}}{1-|z_{n,0}|}.
\end{gather*}
Therefore \begin{gather*}|g'(z_{n,0})|=\Bigl| \frac{B''(z_{2n})}{2B'(z_{2n})}\Bigr|\ge \\ \ge C \exp \Bigl\{ \log  ^{\eta_2+1} \frac1{1-|z_{n,0}|}\Bigr\} \log ^{\eta_1}  \frac 1{1-|z_{n,0}|}\log \log  \frac 1{1-|z_{n,0}|}  , \quad n\to+\infty.\end{gather*}
Hence, \begin{equation}\label{e:g'_growth_low_est}
        \log  M(|z_{n,0}|, g')\ge (1+o(1))\log  ^{\eta_2+1} \frac1{1-|z_{n,0}|}, \quad n\to+\infty.
       \end{equation}
This contradicts to \eqref{e:upp_est_solut}. The theorem is proved.
\end{proof}

\begin{proof}[Proof of Theorem \ref{t:zeros_de1}]
  Let $\mathcal{D}_\rho^+(Z)<D<\infty$.  It follows from the definition of $\mathcal{D}_\rho^+(Z)$ that under the assumption of Theorem \ref{t:bdk} that there exist $R_0>0$ and $r_0\in (0,1)$ such that
$n_z(R\rho(z))<D{R^2}$, $R>R_0$ and $|z|\in (r_0,1)$. Separation condition \eqref{e:rho_separ} and the property  \eqref{e:rho_slow_var}  imply that
each disk $U(z, \rho(z)/3)$ contains at most one point $z_k$, and there exists a constant $D_1\ge D$ such that $n_z(R\rho(z))\le D_1 R^2$ for $0<R\le R_0$ and $|z|\in (r_1, 1)$ for some $r_1\in (r_0,1)$. Therefore
\begin{equation}\label{e:n_z_est}
  n_z\Bigl(R\rho(z)\Bigr)<D_1{R^2}, \quad R>0,\; |z|\in (r_1,1).
\end{equation}
The last estimate directly implies
$$n_z\Bl\frac{1-|z|}2\Br \le \frac{D_1}{4} \frac{(1-|z|)^2}{\rho^2(z)}=\frac{D_1}4 {(1-|z|)^2} \Delta h(r)
, \quad |z|=r\in (r_1, 1).$$
We write $\psi(t)=t^{-2} \Delta h(r) 
\Bigr|_{r=1-t^{-1}}$. It follows from the definition of $\sigma(r)$ that $\psi$ is a nondecreasing function on $[1,\infty)$ and  $\psi(2t)=O(\psi(t))$, $t\ge 2$.
Let us estimate $\tilde{\psi}(T)=\int_{1}^{T} \frac{\psi(t)}{t}\,dt$. We have
\begin{gather*}
  \tilde{\psi}(T)=\int_1^T \frac{\Delta h(r)\Bigr|_{r=1-t^{-1}}}{t^3} \, dt=\int_{0}^{1-T^{-1}} (1-r) \Delta h(r) 
  \, dr= \\ =
  \int_{0}^{1-T^{-1}}  (1-r)\frac{(rh'(r))'}r dr= \\ = (1-r) h'(r) \Bigr|_0^{1-T^{-1}}+  \int_{0}^{1-T^{-1}} \frac{h'(r)}r \, dr\le \\ \le \frac{h'(1-T^{-1})}{T}+ 2 h(1-T^{-1}), \quad T\ge 2.
  \end{gather*}
Since $\frac{h'(r)}{h(r)}=O((1-r)^{-1})$, $r\to 1-$, we deduce that $$\tilde\psi\Bigl(\frac{1}{1-r} \Bigr)=O\Bigl(h(r) \Bigr),\quad r\to 1-.$$

We then estimate $N_{z_k} (t)$ for $t\le (1-|z_k|)/2$. Using the separation condition \eqref{e:rho_separ} and  estimate \eqref{e:n_z_est} we deduce
\begin{gather*}
   N_{z_k} ((1-|z_k|)/2)=\int_0^{(1-|z_k|)/2} \frac{(n_\zeta(t)-1)^+}{t}dt \le \int_{\rho(z_k)/2}^{(1-|z_k|)/2} \frac{n_\zeta(t)}{t}dt=  \\ =
  \int_{1/2}^{(1-|z_k|)/(2\rho(z_k))} \frac{n_\zeta(\rho(z_k) \tau )}{\tau}d\tau\le  \int_{1/2}^{(1-|z_k|)/(2\rho(z_k))} D_1 \tau d\tau \le  \\ \le \frac{D_1}{8}\frac{(1-|z_k|)^2}{\rho^2(z_k)}=\frac12 \psi\Bigl( \frac1{1-r}\Bigr).
\end{gather*}



Since $n_z(\frac{1-|z|}2)=O(\psi(\frac{1}{1-|z|}))$, $|z|\in (r_1, 1)$,
 applying Lemma 9 from \cite{ChyShep}  we get the following  estimate of the canonical product
 \begin{equation}\label{e:can_prod_est}
   \log|P(z)|\le C \tilde{\psi}\Bigl(\frac{1}{1-|z|}\Bigr) \le C h(|z|).
\end{equation}

Any  analytic function $f$ in $\D$ with the zero sequence $Z=(z_k)$ can be written in the form  $f(z)=P(z)e^{g(z)}$, where $g$ is  analytic in $\D$. If $f$ is a solution of   \eqref{e:oscil_eq}, then
\begin{equation}
 \label{e:osc_p_g}
P''+2 P'g' +(g'^2 +g'' +a)P=0,
\end{equation}
and, consequently
\begin{equation}
 \label{e:g_interpol}
g'(z_k)=-\frac{P''(z_k)}{2P'(z_k)}=:b_k, \quad k\in \mathbb{N}.
\end{equation}
Therefore, in order to find  a solution of \eqref{e:oscil_eq} with the zero sequence $Z$ we have to find an analytic function $h=g'$ solving the interpolation problem $h(z_k)=b_k$, $k\in \N$.
Using Cauchy's integral theorem  and \eqref{e:can_prod_est} we deduce
\begin{equation*}
 |P''(z_k)|\le \frac{8}{(1-|z_k|)^2} \max _{|z|=\frac{1+|z_k|}{2}} |P(z)|\le \frac{8}{(1-|z_k|)^2} e^{C \tilde \psi (\frac{ 2}{1-|z_k|})}.
\end{equation*}
On the other hand, \eqref{e:p'est} and \eqref{e:error_est_ro} imply  (cf. \eqref{e:ln_prime}) that
$$ \frac{1}{|P'(z_k)|} \le (1-|z_k|) e^{C\tilde \psi (\frac{ 1}{1-|z_k|})}.$$
Hence
\begin{gather*}
 |b_k|=\Bigl| \frac{P''(z_k)}{2P'(z_k)}\Bigr| \le \frac{4}{1-|z_k|} e^{C \tilde \psi (\frac{ 1}{1-|z_k|})}= \\ =
e^{C \tilde \psi (\frac{ 2}{1-|z_k|})+ \log  \frac{4}{1-|z_k|}}\le e^{C \tilde \psi (\frac{ 1}{1-|z_k|})}, \quad k\in \N,
\end{gather*}
because $\tilde\psi (t)/\log  t\to+\infty$ $(t\to+\infty)$. Since the assumptions of Theorem \ref{t:sufficient_cond_interpol} are satisfied, there exists a function $h$  analytic in $\D$ such that $h(z_k)=b_k$ and $\log  M(r,h)\le C \tilde \psi (\frac{1}{1-r})$,
$r\to 1-$, i.e. $\log  M(r,g')\le C \tilde \psi (\frac{1}{1-r})$,
$r\to1-$.

Then, applying Cauchy's integral theorem once more, we get that
$$ M(r,g'')\le \frac{2 }{1-r} M\Bigl( \frac{1+r}{2}, g'\Bigr) \le e^{C \tilde \psi (\frac 1{1-r})}, \quad r \to1-.$$
From \eqref{e:osc_p_g} we obtain
$$ |a(z)| \le \Bigl| \frac{P''(z)}{P(z)}\Bigr|+ 2|g'(z)| \Bigl| \frac{P'(z)}{P(z)}\Bigr|+ |g'(z)|^2+ |g''(z)|.$$
It follows from results of \cite{ChyGuHe} (or \cite{CHR1}) that for any $\delta>0$ there exists a set $E_
\delta \subset [0,1)$ such that
\begin{equation*}
 \max \Bigl\{  \frac{|P''(z)|}{|P(z)|}, \frac{|P'(z)|}{|P(z)|} \Bigr\} \le \frac{1}{(1-|z|)^q}, \quad |z|\in [0,1)\setminus E_\delta,
\end{equation*}
where $q\in (0,+\infty)$,  and  $m_1(E_\delta \cap [r,1))\le \delta(1-r)$ as $r \uparrow1$.
Thus, \begin{equation}\label{e:a_est_excep}
       |a(z)| \le e^{\tilde C\tilde\psi (\frac{1}{1-|z|})} , \quad |z|\in [0,1)\setminus E.
      \end{equation}
Since $M(r,a)$ increases,  condition \eqref{e:polya} and Lemma 4.1 from \cite{CHR1} imply that  inequality \eqref{e:a_est_excep} holds for all $z\in \D$ for an appropriate choice of $\tilde C$.
\end{proof}

\begin{rmk} Though the hypotheses of Theorem \ref{t:zeros_de1} are similar to those of Theorem \ref{t:bdk} we do not use the interpolating function constructed in \cite{Borich_k}. We need  an interpolating function $f$ to have the property $f(z)\asymp \mathop{\rm dist}(z, Z_f) p(z)$, where $ Z_f$ is the zero set of $F$, $p(z)$ some nonvanishing continuous function in $\D$. But it seems that it is not the case.
\end{rmk}

{\it Address 1:} Faculty of Mechanics and Mathematics, Lviv Ivan Franko National University,
Universytets'ka 1, 79000,
 Lviv,  Ukraine

{\it e-mail:} chyzhykov@yahoo.com

{\it Address 2:} School of Mathematics Science,
           Guizhou  Normal University, Guiyang, Guizhou 550001, China

{\it e-mail:} longjianren2004@163.com

\end{document}